\newcommand{\footremember}[2]{%
    \footnote{#2}
    \newcounter{#1}
    \setcounter{#1}{\value{footnote}}%
}
\newtheoremstyle{break}
  {\topsep}{\topsep}%
  {\itshape}{}%
  {\bfseries}{}%
  {\newline}{}%
\theoremstyle{break}
\newtheorem{theorem}{Theorem}
\newtheorem{prop}{Proposition}
\newtheorem{definition}{Definition}
\definecolor{light-gray}{gray}{0.95}
\begin{document}

\title{High-performance sampling of generic Determinantal
Point Processes}
\author{Jack Poulson\footremember{hodge}{
  jack@hodgestar.com, Hodge Star Scientific Computing}}

\maketitle


\begin{abstract}
Determinantal Point Processes (DPPs) were introduced by
Macchi~\cite{Macchi-1975} as a model for repulsive (fermionic) particle
distributions. But their recent popularization is largely due to their
usefulness for encouraging diversity in the final stage of a recommender
system~\cite{KuleszaTaskar-2012}.

The standard sampling scheme for finite DPPs is a spectral decomposition
followed by an equivalent of a randomly diagonally-pivoted Cholesky
factorization of an orthogonal projection, which
is only applicable to Hermitian kernels and has an expensive setup cost.
Researchers~\cite{Launay-2018,ChenEtAl-2018} have begun to connect DPP sampling
to $LDL^H$ factorizations as a
means of avoiding the initial spectral decomposition, but existing approaches
have only outperformed the spectral decomposition approach in special circumstances, where the number of kept modes is a small percentage of the ground set
size.

This article proves that trivial modifications of $LU$ and $LDL^H$
factorizations yield efficient direct sampling schemes for non-Hermitian and
Hermitian DPP kernels, respectively. Further, it is experimentally shown that
even dynamically-scheduled, shared-memory parallelizations of high-performance
dense and sparse-direct factorizations can be trivially modified to yield DPP
sampling schemes with essentially identical performance.


The software developed as part of this research, Catamari
[\href{https://hodgestar.com/catamari}{hodgestar.com/catamari}] is released
under the Mozilla Public License v2.0. It contains header-only, C++14 plus
OpenMP 4.0 implementations of dense and sparse-direct, Hermitian and non-Hermitian DPP samplers.
\end{abstract}

\section{Introduction}
Determinantal Point Processes (DPPs) were first studied as a distinct class by
Macchi in the mid 1970's \cite{Macchi-1975, Macchi-1977} as a probability
distribution for the locations of repulsive (fermionic) particles, in direct
contrast with Permanental -- or, bosonic -- Point Processes. Particular instances of DPPs, representing the eigenvalue distributions of classical random
matrix ensembles, appeared in a series of papers in the beginning of the
1960's~\cite{MehtaGaudin-1960, Dyson-1962a, Dyson-1962b, Dyson-1962c, Ginibre-1965} (see \cite{Diaconis-2003} for a review, and \cite{EdelmanRao-2005} for a
computational perspective on sampling classical eigenvalue distributions). Some of the early investigations of (finite) DPPs involved their usage for uniformly
sampling spanning trees of
graphs~\cite{BurtonPemantle-1993, BenjaminiEtAl-2001} (see
\cref{fig:grid_40,fig:hexagonal_10}), non-intersecting random
walks~\cite{Johansson-2004}, and domino tilings of the Aztec diamond
\cite{Kasteleyn-1961, TemperleyFisher-1961, Kasteleyn-1963, ChhitaEtAl-2015} 
(see \cref{fig:aztec_10,fig:aztec_80}).

Let us recall that the class of {\em immanants} provides a generalization of
the determinant and permanent of a matrix by using an arbitrary character
$\chi$ of the symmetric group $S_n$ to determine the signs of terms in the
summation:
\[
  |A|^{\chi} \equiv \sum_{\sigma \in S_n} \chi(\sigma) \prod_{i=1}^n A_{i,\sigma(i)}.
\]
Choosing the trivial character $\chi \equiv 1$ yields the permanent, while
choosing $\chi(\sigma) = \text{sign}(\sigma)$ provides the determinant.
The resulting generalization from Determinantal Point Processes and
Permanental Point Processes to Immanantal Point Processes was studied in
\cite{DiaconisEvans-2000}.

\begin{figure}

\begin{subfigure}{0.5\textwidth}
\centering
\includegraphics[width=1.75in]{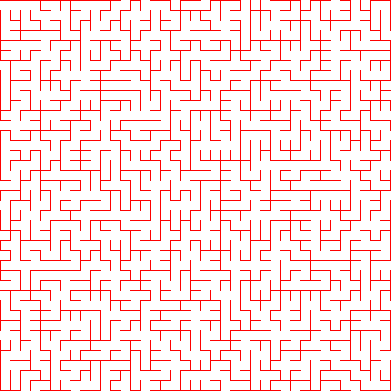}
\end{subfigure}
\begin{subfigure}{0.5\textwidth}
\centering
\includegraphics[width=1.75in]{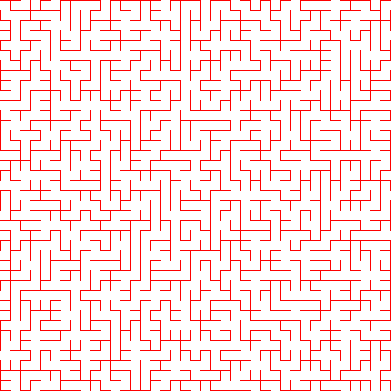}
\end{subfigure}

\caption{Two samples from the uniform distribution over spanning trees of a 40
x 40 box of $\mathbb{Z}^2$. Each has likelihood $\text{exp}(-1794.24)$.}
\label{fig:grid_40}

\end{figure}

\begin{figure}

\begin{subfigure}{0.5\textwidth}
\centering
\includegraphics[width=1.75in]{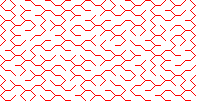}
\end{subfigure}
\begin{subfigure}{0.5\textwidth}
\centering
\includegraphics[width=1.75in]{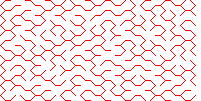}
\end{subfigure}

\caption{Two samples from the uniform distribution over spanning trees of a 10
x 10 section of hexagonal tiles. Each has likelihood $\text{exp}(-299.101)$.}
\label{fig:hexagonal_10}

\end{figure}

\begin{figure}

\begin{subfigure}{0.5\textwidth}
\centering
\includegraphics[width=1.75in]{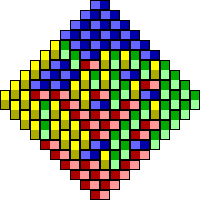}
\end{subfigure}
\begin{subfigure}{0.5\textwidth}
\centering
\includegraphics[width=1.75in]{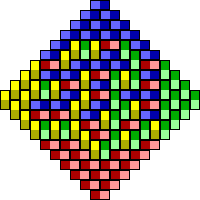}
\end{subfigure}

\caption{Two samples from the Aztec diamond DPP of order 10, defined via a
complex, non-Hermitian kernel based upon Kenyon's formula over the Kasteleyn
matrix. The tiles all begin on dark squares and are oriented either:
left (blue), right (red), up (yellow), or down (green).
Their likelihoods are both $\text{exp}(-38.1231)$.}
\label{fig:aztec_10}

\end{figure}

\begin{figure}

\begin{subfigure}{0.5\textwidth}
\centering
\includegraphics[width=2.4in]{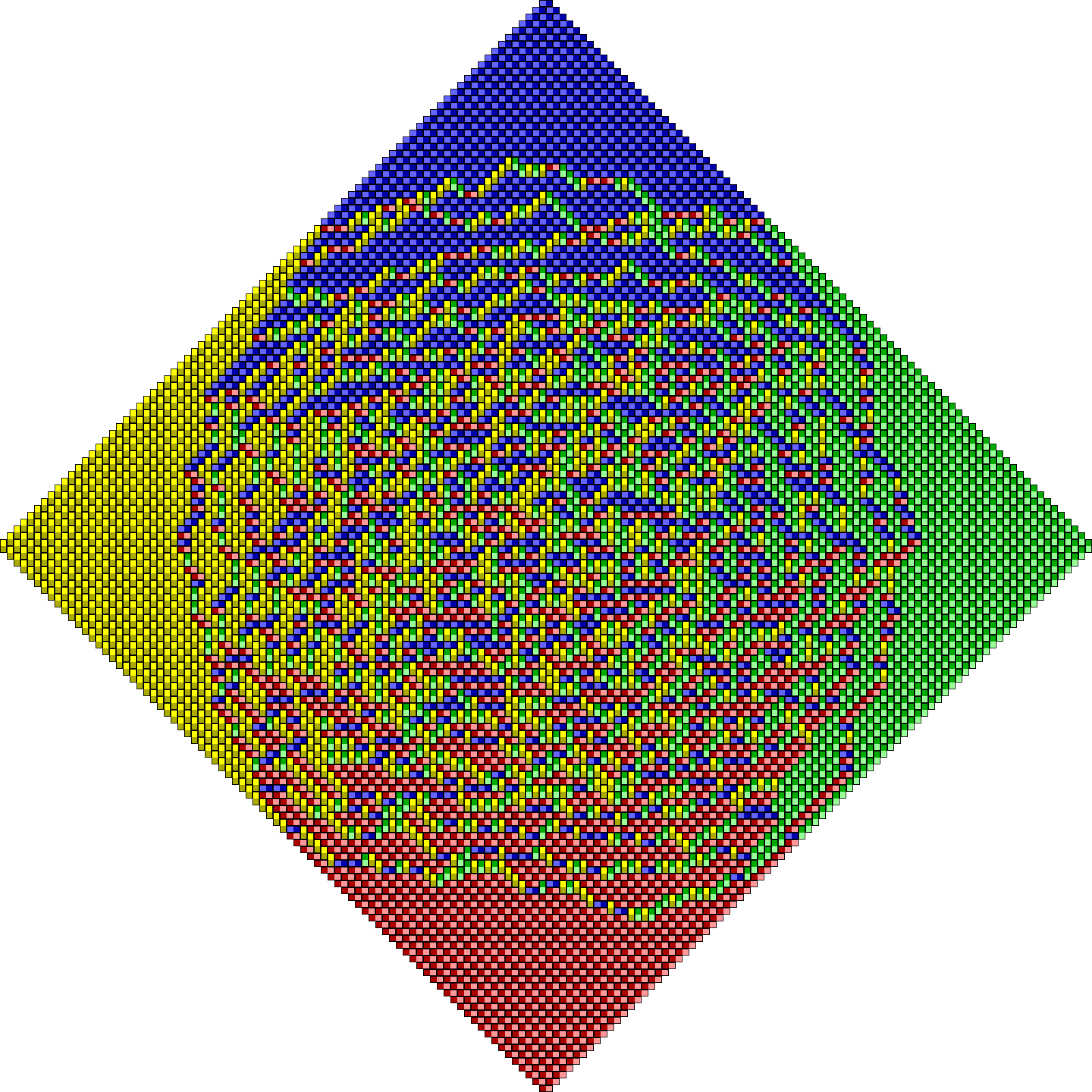}
\end{subfigure}
\begin{subfigure}{0.5\textwidth}
\centering
\includegraphics[width=2.4in]{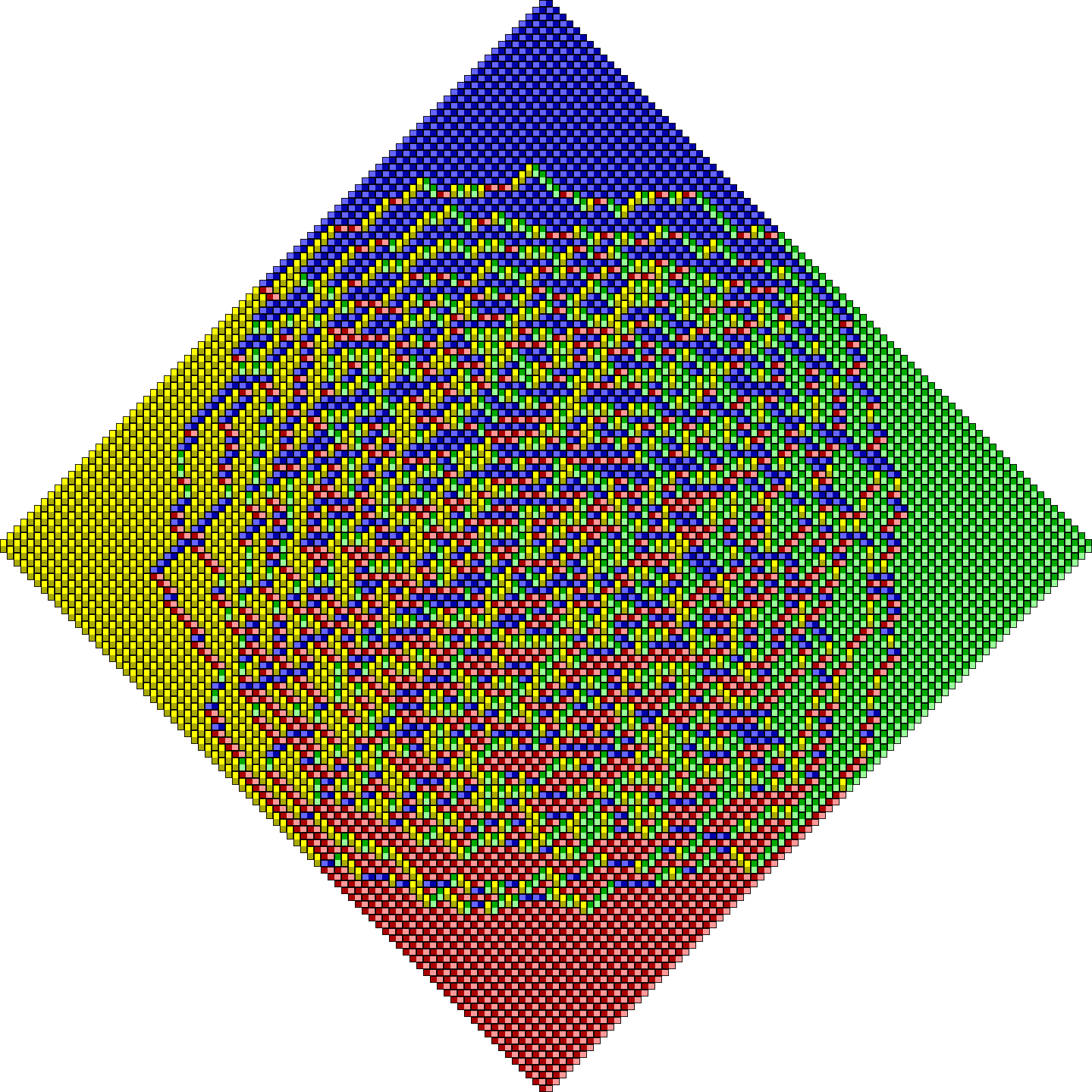}
\end{subfigure}

\caption{Two samples from the Aztec diamond DPP of order 80, defined via a
complex, non-Hermitian kernel based upon Kenyon's formula over the Kasteleyn
matrix. Their likelihoods are both $\text{exp}(-2245.8)$.
The diamond is large enough to clearly display the {\em arctic circle} phenomenon discussed in \cite{JockuschEtAl-1998} and \cite{ChhitaEtAl-2015}: ``when $n$
is sufficiently large, the shape of the central sub-region becomes arbitrarily
close to a perfect circle of radius $n/\sqrt{2}$ for all but a negligible
proportion of the tilings.''}
\label{fig:aztec_80}

\end{figure}

\begin{definition}
A finite {\bf Determinantal Point Process} is a random variable
$\mathbf{Y} \sim \text{DPP}(K)$ over the power set of a ground set $\{0, 1, ..., n-1\} = [n]$ such that
\[
  \mathbb{P}[Y \subseteq \mathbf{Y}] = \det(K_Y),
\]
where $K \in \mathbb{C}^{n \times n}$ is called the
{\bf marginal kernel matrix} and $K_Y$ denotes the restriction of $K$ to the
row and column indices of $Y$.
\end{definition}

We can immediately observe that the $j$-th diagonal entry of a marginal kernel
$K$ is the probability of index $j$ being in the sample, $\mathbf{Y}$, so the
diagonal of every marginal kernel must lie in
$[0, 1] \subset \mathbb{R}$. A characteristic requirement for a complex matrix
to be admissible as a marginal kernel is given in the following proposition,
due to Brunel~\cite{Brunel-2018}, which we will provide an alternative proof of
after introducing our factorization-based sampling algorithm.

\begin{prop}[Brunel \cite{Brunel-2018}]
A matrix $K \in \mathbb{C}^{n \times n}$ is admissible as a DPP marginal
kernel iff
\[
  (-1)^{|J|} \det(K - \mathds{1}_J) \ge 0,\;\;\forall J \subseteq [n].
\]
\end{prop}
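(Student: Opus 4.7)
\emph{Proof proposal.} The strategy is to recognize $(-1)^{|J|}\det(K-\mathds{1}_J)$ as a candidate atomic probability $\mathbb{P}[\mathbf{Y}=[n]\setminus J]$, after which the proposition reduces to the tautology that a consistent inclusion-probability system comes from a bona fide probability measure iff those atomic values are all nonnegative.

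Concretely, I would set $p(S) := (-1)^{|S^c|}\det(K-\mathds{1}_{S^c})$ and first establish the Möbius-type identity
\[
p(S) \;=\; \sum_{T \supseteq S}(-1)^{|T\setminus S|}\det(K_T).
\]
To prove this, expand $\det(K-\mathds{1}_{S^c})$ by multilinearity on the rows indexed by $S^c$; each such row splits as $K_{i,\cdot}-e_i^T$, producing a sum over $R\subseteq S^c$ of terms $(-1)^{|R|}\det(M^{(R)})$, where $M^{(R)}$ replaces the rows in $R$ by the corresponding standard basis vectors. A short block-triangular argument (apply the same permutation to rows and columns to move $R$ to the top/left, observe that the top-left block becomes $I_{|R|}$ and the top-right block vanishes) reduces $\det(M^{(R)})=\det(K_{[n]\setminus R})$. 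Reindexing by $T=[n]\setminus R$ and collecting signs yields the displayed identity.

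Given the identity, Möbius inversion on the Boolean lattice $2^{[n]}$ immediately gives
\[
\det(K_Y) \;=\; \sum_{S\supseteq Y} p(S),\qquad \forall\,Y\subseteq[n],
\]
and in particular $\sum_S p(S)=\det(K_\emptyset)=1$. The proposition follows at once: if $p(S)\geq 0$ for every $S$, then $\{p(S)\}$ is an honest probability distribution on $2^{[n]}$ whose induced random set has inclusion probabilities $\det(K_Y)$, certifying that $K$ is admissible. Conversely, if $K$ is admissible with sample $\mathbf{Y}$, the standard inclusion-exclusion inversion of $\mathbb{P}[Y\subseteq\mathbf{Y}]=\det(K_Y)$ forces $p(S)=\mathbb{P}[\mathbf{Y}=S]\geq 0$, i.e. $(-1)^{|J|}\det(K-\mathds{1}_J)=p([n]\setminus J)\geq 0$.

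The hard part will be the sign bookkeeping in the multilinear expansion: verifying $\det(M^{(R)})=\det(K_{[n]\setminus R})$ via the simultaneous row/column permutation, and then correctly recombining $(-1)^{|S^c|}$, $(-1)^{|R|}$, and the substitution $T=[n]\setminus R$ into the clean parity $(-1)^{|T\setminus S|}$. Once that identity is established, the Möbius inversion and the two-way equivalence are formalities.
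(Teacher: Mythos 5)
Your proof is correct, but it takes a genuinely different route from the paper's. You identify $(-1)^{|J|}\det(K-\mathds{1}_J)$ directly as the atomic probability $\mathbb{P}[\mathbf{Y}=[n]\setminus J]$ via a multilinear row expansion (whose sign bookkeeping does check out: $\det(M^{(R)})=\det(K_{[n]\setminus R})$ by repeated cofactor expansion along the unit rows, and the parities combine to $(-1)^{|T\setminus S|}$) followed by M\"obius inversion on the Boolean lattice; the equivalence then reduces to ``nonnegative numbers summing to $\det(K_\emptyset)=1$ form a probability distribution with the right inclusion probabilities.'' The paper instead derives the proposition as a corollary of its factorization-based sampler (Theorem~\ref{factorization-dpp}): it reads off $\det(K-\mathds{1}_{Y^C})=\prod_{j\in Y}p_j\prod_{j\in Y^C}(p_j-1)$ from the final diagonal of the $LU$ factorization produced by Algorithm~\ref{lst:unblocked_dpp}, where the $p_j$ are the sequential conditional inclusion probabilities, and then argues (by a backward induction that is only sketched) that nonnegativity of $(-1)^{|Y^C|}\det(K-\mathds{1}_{Y^C})$ for all $Y$ is equivalent to every such conditional probability lying in $[0,1]$. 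Your argument is self-contained, elementary, and independent of the sampling machinery --- essentially Brunel's original proof --- and it is arguably more rigorous, since it avoids the almost-sure caveats about vanishing pivots. What the paper's route buys is the reinterpretation it is explicitly after: the admissibility criterion becomes the statement that every pivot encountered by the sampler is a valid Bernoulli parameter. Both proofs ultimately rest on the same identity, reached by different means: yours factors the quantity through inclusion--exclusion, the paper's through the chain rule of conditional probabilities realized as an $LU$ factorization.
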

\noindent
We can also observe that, because determinants are preserved under similarity
transformations, there is a nontrivial equivalence class for marginal kernels
-- which is to say, there are many marginal kernels defining the same DPP:
\begin{prop}
The equivalence class of a DPP kernel $K \in \mathbb{C}^{n \times n}$ contains
its orbit under the group of diagonal similarity transformations, i.e.,
\[
  \{ D^{-1} K D \, : \, D = \text{diag}(d), d \in (\mathbb{C} \setminus \{0\})^n \}.
\]
If we restrict to the classes of complex Hermitian or real symmetric kernels,
the same statement holds with the entries of $d$ restricted to the circle
group $U(1) = \{z \in (\mathbb{C},\times) : |z| = 1 \}$ and the scalar
orthogonal group $O(1) = \{z \in (\mathbb{R},\times) : |z| = 1\}$, respectively.
\end{prop}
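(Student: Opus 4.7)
The plan is to verify the claim directly by comparing the principal-minor characterization of DPP kernels. Two kernels $K,K'\in\mathbb{C}^{n\times n}$ define the same DPP if and only if $\det(K_Y)=\det(K'_Y)$ for every $Y\subseteq[n]$, so it suffices to check this equality for $K'=D^{-1}KD$ and then to understand when such a $K'$ remains Hermitian or real symmetric.

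First I would exploit the crucial fact that $D$ is diagonal, so the restriction operator commutes with conjugation by $D$: for any $Y\subseteq[n]$, $(D^{-1}KD)_Y = D_Y^{-1}\, K_Y\, D_Y$, where $D_Y$ is the principal submatrix of $D$ indexed by $Y$ (itself diagonal and invertible, since no $d_i$ vanishes). Multiplicativity of the determinant then gives
\[
  \det\bigl((D^{-1}KD)_Y\bigr) = \det(D_Y)^{-1}\det(K_Y)\det(D_Y) = \det(K_Y),
\]
so the principal minors agree for every $Y$ and hence $D^{-1}KD$ defines the same DPP as $K$.

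Next I would address the restrictions to Hermitian and real symmetric kernels. Here the only extra requirement is that the map $K\mapsto D^{-1}KD$ preserve the symmetry class. Writing out $(D^{-1}KD)^H = D^H K^H D^{-H}$ and equating it to $D^{-1}KD$ for all Hermitian $K$, one is forced into $D^H D = \lambda I$ for some positive real scalar $\lambda$, i.e.\ all $|d_i|$ are equal. Since $D^{-1}KD$ is unchanged when $D$ is rescaled by a nonzero constant, we may normalise so that every $|d_i|=1$, which is precisely membership in $U(1)$ on each entry; an identical argument over $\mathbb{R}$ forces $d_i\in\{\pm 1\}=O(1)$. Combined with the first step, this shows that the stated diagonal orbits are contained in the equivalence class, completing the proof.

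I do not anticipate a real obstacle: the heart of the argument is just that diagonal conjugation commutes with principal-submatrix extraction and preserves determinants. The only mild subtlety is the rescaling step in the Hermitian/symmetric case, where one must argue that an overall positive scalar in $D$ acts trivially on $D^{-1}KD$ and therefore can be quotiented out to land in $U(1)$ or $O(1)$.
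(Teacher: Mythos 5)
Your proof is correct and follows essentially the same route as the paper: determinant invariance under diagonal similarity gives equality of all principal minors, and the unitary (resp.\ signature) restriction is exactly what makes the similarity a Hermitian (resp.\ real symmetric) congruence. You are in fact slightly more careful than the paper on the key point that restriction to $Y$ commutes with diagonal conjugation, i.e.\ $(D^{-1}KD)_Y = D_Y^{-1} K_Y D_Y$ --- which is what makes ``similarity preserves determinants'' applicable minor-by-minor --- and your extra derivation that $DD^H=\lambda I$ is forced is a harmless converse not required for the stated containment.
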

\begin{proof}
Determinants are preserved under similarity transformations, so the probability
of inclusion of each subset is unchanged by global diagonal similarity. That
a unitary diagonal similarity preserves Hermiticity follows from recognition
that it becomes a Hermitian congruence. Likewise, a signature matrix similarity
transformation becomes a real symmetric congruence.
\end{proof}
\noindent

In most works, with the notable exception of Kasteleyn matrix approaches to
studying domino tilings of the Aztec
diamond~\cite{Kasteleyn-1961, TemperleyFisher-1961, Kasteleyn-1963,
ChhitaEtAl-2015}, the marginal kernel is assumed Hermitian. And
Macchi~\cite{Macchi-1975} showed (Cf.~\cite{Soshnikov-2000, HoughEtAl-2006})
that a Hermitian matrix is admissibile as a marginal kernel if and only if its
eigenvalues all lie in $[0, 1]$. The viewpoint of these eigenvalues as
probabilities turns out to be productive, as the most common sampling algorithm
for Hermitian DPPs, due to~\cite{HoughEtAl-2006} and popularized in the
machine learning community by~\cite{KuleszaTaskar-2012}, produces an equivalent
random orthogonal projection matrix by preserving eigenvectors with probability 
equal to their eigenvalue:

\begin{theorem}[Theorem 7 of \cite{HoughEtAl-2006}]
Given any Hermitian marginal kernel matrix $K \in \mathbb{C}^{n \times n}$ with
spectral decomposition $Q \Lambda Q^H$, sampling from
$\mathbf{Y} \sim \text{DPP}(K)$ is equivalent to sampling from a realization of
the random DPP with kernel $Q_{:,\mathbf{Z}} Q_{:,\mathbf{Z}}^H$, where
$Q_{:,\mathbf{Z}}$ consists of the columns of $Q$ with
$\mathbb{P}[j \in \mathbf{Z}] = \Lambda_j$, independently for each $j$.
\end{theorem}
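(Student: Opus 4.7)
The plan is to verify that the mixture distribution described has marginal kernel exactly $K$, which by the definition of a DPP suffices to establish equivalence. So I will fix an arbitrary subset $Y \subseteq [n]$ and show that the two-stage process (first sample $\mathbf{Z}$, then sample from $\mathrm{DPP}(Q_{:,\mathbf{Z}} Q_{:,\mathbf{Z}}^H)$) assigns $Y$ the inclusion probability $\det(K_Y)$. By the tower property,
\[
  \mathbb{P}[Y \subseteq \mathbf{Y}] = \mathbb{E}_{\mathbf{Z}}\bigl[\det\bigl((Q_{:,\mathbf{Z}} Q_{:,\mathbf{Z}}^H)_Y\bigr)\bigr].
\]
Note that $(Q_{:,\mathbf{Z}} Q_{:,\mathbf{Z}}^H)_Y = Q_{Y,\mathbf{Z}} Q_{Y,\mathbf{Z}}^H$, so the integrand is the determinant of a Gram matrix.

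Next I would apply the Cauchy--Binet formula to this Gram matrix, obtaining
\[
  \det\bigl(Q_{Y,\mathbf{Z}} Q_{Y,\mathbf{Z}}^H\bigr) = \sum_{\substack{S \subseteq \mathbf{Z} \\ |S| = |Y|}} \bigl|\det(Q_{Y,S})\bigr|^2.
\]
Rewriting the constraint $S \subseteq \mathbf{Z}$ as an indicator and taking the expectation, the independence of the coordinate indicators in $\mathbf{Z}$ converts $\mathbb{P}[S \subseteq \mathbf{Z}]$ into $\prod_{j \in S} \Lambda_j$. Thus
\[
  \mathbb{P}[Y \subseteq \mathbf{Y}] = \sum_{\substack{S \subseteq [n] \\ |S| = |Y|}} \bigl|\det(Q_{Y,S})\bigr|^2 \prod_{j \in S} \Lambda_j.
\]

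It then remains to match this with $\det(K_Y)$. Here I would use that admissibility of $K$ as a Hermitian marginal kernel (cited from Macchi just above) gives $\Lambda_j \in [0,1]$, so that $\Lambda^{1/2}$ is well-defined and real. Writing $K_Y = Q_{Y,:} \Lambda Q_{Y,:}^H = (Q_{Y,:} \Lambda^{1/2})(Q_{Y,:} \Lambda^{1/2})^H$ and applying Cauchy--Binet a second time yields
\[
  \det(K_Y) = \sum_{\substack{S \subseteq [n] \\ |S| = |Y|}} \bigl|\det\bigl((Q_{Y,:} \Lambda^{1/2})_{:,S}\bigr)\bigr|^2 = \sum_{S} \bigl|\det(Q_{Y,S})\bigr|^2 \prod_{j \in S} \Lambda_j,
\]
which matches the expression above and closes the argument.

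The main conceptual obstacle is really just keeping the bookkeeping straight across the two applications of Cauchy--Binet: one over column subsets of $Q_{Y,\mathbf{Z}}$ to evaluate the conditional inclusion probability, and one over column subsets of $Q_{Y,:}\Lambda^{1/2}$ to evaluate $\det(K_Y)$. Once both are written side by side, the independence of the Bernoulli draws for $\mathbf{Z}$ produces exactly the $\prod_{j \in S} \Lambda_j$ weights needed to align them term-by-term. No further probabilistic machinery is required, and the argument proceeds purely through multilinear algebra combined with the independence of the indicator variables defining $\mathbf{Z}$.
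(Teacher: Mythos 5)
The paper does not actually prove this statement; it is imported verbatim as Theorem~7 of Hough et al.\ and used as background, so there is no in-paper argument to compare yours against. On its own merits, your proof is correct and is the standard finite-dimensional argument: condition on $\mathbf{Z}$, apply Cauchy--Binet to the Gram matrix $Q_{Y,\mathbf{Z}}Q_{Y,\mathbf{Z}}^H$, use independence of the coordinate indicators to turn $\mathbb{P}[S \subseteq \mathbf{Z}]$ into $\prod_{j\in S}\Lambda_j$, and match against a second Cauchy--Binet expansion of $\det(K_Y) = \det\bigl((Q_{Y,:}\Lambda^{1/2})(Q_{Y,:}\Lambda^{1/2})^H\bigr)$, which requires exactly the admissibility condition $\Lambda_j \in [0,1]$ that the paper quotes from Macchi just above the theorem. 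The only step worth making explicit is the very first one: you show the mixture has the same joint inclusion probabilities $\mathbb{P}[Y\subseteq\mathbf{Y}]$ as $\mathrm{DPP}(K)$ for every $Y$, and you should note (one sentence suffices) that on a finite ground set these probabilities determine the full distribution by inclusion--exclusion, so matching them really does establish equality of the two processes rather than merely of their marginal kernels. With that remark added, the argument is complete.
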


Such an orthogonal projection marginal kernel is said to define a
{\bf Determinantal Projection Process}~\cite{HoughEtAl-2006}, or
{\bf elementary DPP}~\cite{KuleszaTaskar-2012},
and it is known that the resulting samples almost surely have cardinality equal
to the rank of the projection (e.g., Lemma 17 of \cite{HoughEtAl-2006}).
The algebraic specification in Alg.~1 of \cite{KuleszaTaskar-2012} of the
Alg.~18 of \cite{HoughEtAl-2006} involved an $O(n k^3)$ approach, where $k$ is
the rank of the projection kernel. In many important cases, such as uniformly
sampling spanning trees or domino tilings, $k$ is a large fraction of $n$, and
the algorithm has quartic complexity (assuming standard matrix multiplication).
In the words of \cite{KuleszaTaskar-2012}:
\begin{quote}
Alg. 1 runs in time $O(n k^3)$, where $k$ is the number of eigenvectors
selected [...] the  initial  eigendecomposition [...] is often the
computational bottleneck, requiring $O(n^3)$ time. Modern multi-core machines
can compute eigendecompositions up to $n\approx 1,000$ at interactive speeds of 
a few seconds, or larger problems up to $n\approx 10,000$ in around ten
minutes.
\end{quote}

A faster, $O(n k^2)$ algorithm for sampling elementary DPPs was given as
Alg.~2 of \cite{Gillenwater-2014}. We will later show that this approach is
equivalent to a small modification of a diagonally-pivoted, rank-revealing,
left-looking, Cholesky factorization~\cite{Higham-1990}, where the pivot index
is chosen at each iteration by sampling from the probability distribution
implied by the diagonal of the remaining submatrix (which is maintained
out-of-place).

As a brief aside, we recall that the distinction between {\em up-looking},
{\em left-looking},
and {\em right-looking} factorizations is based upon their choice of loop
invariant. Given the matrix partitioning
\[
  \begin{pmatrix} A_{1,1} & A_{1,2} \\
  A_{2,1} & A_{2,2}
  \end{pmatrix},
\]
where the diagonal of $A_{1,1}$ comprises the list of eliminated pivots:
an {\em up-looking} algorithm will have only updated $A_{1,1}$ (by overwriting
it with its factorization), a {\em left-looking} algorithm will also have
overwritten $A_{2,1}$ with the corresponding block column of the lower
triangular factor
(and $A_{1,2}$ with its upper-triangular factor in the nonsymmetric case), and a
{\em right-looking} algorithm will additionally have overwritten $A_{2,2}$
with the Schur complement resulting from the block elimination of $A_{1,1}$.

Researchers have begun proposing algorithms which directly sample from
Hermitian marginal kernels by sequentially deciding whether each index should be
in the result by sampling the Bernoulli process defined by the probability of
the item's inclusion, conditioned on the combination of all of the explicit
inclusion and exclusion decisions made so far. For example, when deciding whether to include index $j$ in the sample, we will have already partitioned $[0,...,j-1]$ into a set of included indices, $F$, and excluded indices, $G$. So we must
sample index $j$ with probability $\mathbb{P}[j \in \mathbf{Y} \,|\, F \subseteq \mathbf{Y},\, G \cap \mathbf{Y} = \emptyset]$.
Directly performing such sampling using formulae for marginal kernels of
conditional DPPs is referred to in \cite{Launay-2018} as {\em
sequential sampling} (Cf. \cite{ChenEtAl-2018} for greedy, maximum-likelihood
inference).

The primary contribution of this manuscript is to show that sequential sampling
can be performed via a small modification of an unpivoted $LDL^H$
factorization process (where $L$ is unit lower-triangular and $D$ is real
diagonal) and to extend them to non-Hermitian
marginal kernels using an unpivoted $LU$ factorization;
\cite{GartrellEtAl-2018} motivates an algorithm for learning non-Hermitian DPP
kernels by their ability to incorporate both
attraction and repulsion between items.

It is then demonstrated that high-performance factorizations
techniques~\cite{AndersonEtAl-1990,ChanEtAl-2008,ButtariEtAl-2009}
lead to orders of magnitude accelerations, and that sparse-direct
techniques~\cite{Schreiber-1982,AshcraftGrimes-1989,ChenEtAl-2008} can yield
further orders of magnitude speedups.

\section{Prototype factorization-based DPP sampling}
To describe factorization processes, we will extend our earlier notation that,
for an $n \times n$ matrix $K$ and an index subset $Y \subseteq [n]$, $K_Y$
refers to the restriction of $K$ to the row and column indices of $Y$. Given
a second index subset $Z \subseteq [n]$, $K_{Y, Z}$ will denote the restriction
of $K$ to the $|Y| \times |Z|$ submatrix consisting of the rows indices in $Y$
and the column indices in $Z$. And we use the notation $[j:k]$ to represent the
integer range $\{j, j+1, ..., k-1\}$.

Our derivation will make use of a few elementary propositions on the forms of
marginal kernels for conditional DPPs. These propositions will allow us to
define modifications to the pivots of an LU factorization so that the resulting
Schur complements correspond to the marginal kernel of the DPP over the
remaining indices, conditioned on the inclusion decisions of the indices
corresponding to the eliminated pivots.

\begin{prop}
\label{condition_on_inclusion}
Given disjoint subsets $A,B \subseteq [n]$ of the ground set of a DPP with
marginal kernel $K$, almost surely
\[
  \mathbb{P}[B \subseteq \mathbf{Y} | A \subseteq \mathbf{Y}] =
  \det(K_B - K_{B,A} K_A^{-1} K_{A,B}).
\]
\end{prop}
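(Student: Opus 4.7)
The plan is to reduce the statement to a pure linear-algebra identity, namely the Schur complement formula for block determinants, after rewriting the conditional probability as a ratio of two unconditional inclusion probabilities.

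First, I would invoke the definition of conditional probability on the event $\{A \subseteq \mathbf{Y}\}$, which is meaningful precisely when $\mathbb{P}[A \subseteq \mathbf{Y}] > 0$. By the defining property of a DPP this probability equals $\det(K_A)$, so on the almost-sure event where the conditioning is well defined we have
\[
  \mathbb{P}[B \subseteq \mathbf{Y} \mid A \subseteq \mathbf{Y}]
  = \frac{\mathbb{P}[A \cup B \subseteq \mathbf{Y}]}{\mathbb{P}[A \subseteq \mathbf{Y}]}
  = \frac{\det(K_{A \cup B})}{\det(K_A)},
\]
where in the last equality I use that $A$ and $B$ are disjoint, so the joint inclusion event is exactly $\{A \cup B \subseteq \mathbf{Y}\}$.

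Next I would block-partition the principal submatrix $K_{A \cup B}$ (after an appropriate symmetric row/column permutation so that the $A$-indices come first) as
\[
  K_{A \cup B} \;\sim\; \begin{pmatrix} K_A & K_{A,B} \\ K_{B,A} & K_B \end{pmatrix},
\]
noting that a symmetric permutation does not change the determinant. The standard Schur complement identity then gives
\[
  \det(K_{A \cup B}) = \det(K_A)\,\det\!\left(K_B - K_{B,A} K_A^{-1} K_{A,B}\right),
\]
valid because we are on the event where $\det(K_A) \neq 0$, so the inverse exists. Substituting into the conditional-probability ratio cancels $\det(K_A)$ and yields the claimed formula.

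The only potential subtlety is the ``almost surely'' qualifier: if $\det(K_A) = 0$ then $\mathbb{P}[A \subseteq \mathbf{Y}] = 0$, so the conditional probability is only defined up to a null set, and the identity is asserted on the complement of this null set. The Schur complement step is also the one algebraic fact that really does work here, and it is completely standard, so there is no serious obstacle; the entire argument is essentially a one-line computation once the conditional probability is written as a ratio of minors.
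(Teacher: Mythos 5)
Your proposal is correct and follows essentially the same route as the paper: both write the conditional probability as the ratio $\det(K_{A\cup B})/\det(K_A)$ and then apply the Schur complement determinant identity (which the paper derives explicitly via a two-by-two block LU decomposition) to cancel $\det(K_A)$. The handling of the almost-sure qualifier via $\det(K_A) = \mathbb{P}[A \subseteq \mathbf{Y}] > 0$ on the conditioning event also matches the paper.
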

\begin{proof}
If $A \subseteq \mathbf{Y}$, then
$\det(K_A) = \mathbb{P}[A \subseteq \mathbf{Y}] > 0$ almost surely, so we may
perform a two-by-two block LU decomposition
\[
\begin{pmatrix} K_A & K_{A,B} \\ K_{B,A} & K_B \end{pmatrix} =
\begin{pmatrix} I & 0 \\ K_{B,A} K_{A}^{-1} & K_B - K_{B,A} K_A^{-1} K_{A,B}
\end{pmatrix}
\begin{pmatrix} K_A & K_{A,B} \\ 0 & I \end{pmatrix}.
\]
That $\det : GL(n,\mathbb{C}) \mapsto (\mathbb{C},\times)$ is a homomorphism
yields
\[
  \det(K_{A \cup B}) = \det(K_A) \det(K_B - K_{B,A} K_A^{-1} K_{A,B}).
\]
The result then follows from the definition of conditional probabilities for a
DPP:
\[
  \mathbb{P}[B \subseteq \mathbf{Y} | A \subseteq \mathbf{Y}] =
  \frac{\mathbb{P}[A,B \subseteq \mathbf{Y}]}{\mathbb{P}[A \subseteq \mathbf{Y}]} =
  \frac{\det(K_{A \cup B})}{\det(K_A)}.
\]
\end{proof}

\begin{prop}
\label{condition_on_element_exclusion}
Given disjoint $a \in [n]$ and $B \subset [n]$ for a ground set $[n]$ of a DPP
with marginal kernel $K$, almost surely
\[
  \mathbb{P}[B \subset \mathbf{Y} | a \notin \mathbf{Y}] =
  \det(K_B - K_{B,a} (K_a - 1)^{-1} K_{a,B}).
\]
\end{prop}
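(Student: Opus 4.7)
The plan is to mimic the proof of Proposition~\ref{condition_on_inclusion}, but to replace the block LU identity applied to $K_{A \cup B}$ with a block LU identity applied to a perturbed submatrix whose Schur complement is exactly the matrix appearing on the right-hand side. The bridge between the two viewpoints is the elementary identity $\mathbb{P}[B \subset \mathbf{Y},\, a \notin \mathbf{Y}] = \mathbb{P}[B \subset \mathbf{Y}] - \mathbb{P}[B \cup \{a\} \subset \mathbf{Y}]$, which lets us write the numerator as a difference of two principal minors of $K$.

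I would first apply Bayes' rule to obtain
\[
  \mathbb{P}[B \subset \mathbf{Y} \,|\, a \notin \mathbf{Y}] \;=\; \frac{\det(K_B) - \det(K_{B \cup \{a\}})}{1 - K_a},
\]
noting that the conditioning event occurs with positive probability almost surely, so $1 - K_a \neq 0$ on that event and the division is legitimate. The only substantive content remaining is then to identify the right-hand side of the proposition with this ratio.

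Next, I would introduce the auxiliary $(|B|+1) \times (|B|+1)$ matrix
\[
  M \;=\; \begin{pmatrix} K_a - 1 & K_{a,B} \\ K_{B,a} & K_B \end{pmatrix},
\]
and observe, exactly as in Proposition~\ref{condition_on_inclusion}, that the same two-by-two block LU factorization yields $\det(M) = (K_a - 1)\,\det(K_B - K_{B,a}(K_a - 1)^{-1} K_{a,B})$. Separately, by multilinearity of the determinant in the first row (equivalently, by cofactor expansion along the row containing the $-1$ perturbation, using that the cofactors along that row are independent of its entries and that the $(1,1)$ cofactor equals $\det(K_B)$), I would expand
\[
  \det(M) \;=\; \det\!\begin{pmatrix} K_a & K_{a,B} \\ K_{B,a} & K_B \end{pmatrix} \;-\; \det(K_B) \;=\; \det(K_{B \cup \{a\}}) - \det(K_B).
\]

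Combining the two expressions for $\det(M)$ and dividing by $K_a - 1$ gives $\det(K_B - K_{B,a}(K_a - 1)^{-1} K_{a,B}) = (\det(K_B) - \det(K_{B \cup \{a\}}))/(1 - K_a)$, which matches the conditional probability computed from Bayes' rule. The main point to watch is the sign bookkeeping when extracting the $-1$ from the $(1,1)$ entry to apply the multilinearity step, and verifying that the invertibility of $K_a - 1$ required to form the Schur complement is exactly the condition that the conditioning event is nontrivial; neither is a serious obstacle, so the proof reduces to two applications of block $LU$ reasoning plus one elementary probability identity.
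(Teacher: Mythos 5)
Your proposal is correct, and it agrees with the paper for the probabilistic half of the argument: both reduce, via Bayes' rule, to showing that $\det(K_B - K_{B,a}(K_a-1)^{-1}K_{a,B}) = (\det(K_B) - \det(K_{a \cup B}))/(1 - K_a)$. Where you diverge is in how that determinant identity is established. The paper writes $\det(K_{a\cup B}) = \det(K_B)\,(K_a - K_{a,B}K_B^{-1}K_{B,a})$ and then invokes the Matrix Determinant Lemma on the rank-one update $K_B - K_{B,a}(K_a-1)^{-1}K_{a,B}$; both steps pass through $K_B^{-1}$ and so implicitly require $K_B$ nonsingular (which holds almost surely, as the paper notes). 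You instead evaluate $\det(M)$ for the bordered matrix $M = \bigl(\begin{smallmatrix} K_a - 1 & K_{a,B} \\ K_{B,a} & K_B \end{smallmatrix}\bigr)$ in two ways --- once by the Schur complement off the $(1,1)$ entry, once by multilinearity in the first row --- which needs only $K_a \neq 1$ and never inverts $K_B$. Your sign bookkeeping checks out: $\det(M) = \det(K_{a\cup B}) - \det(K_B) = (K_a-1)\det(K_B - K_{B,a}(K_a-1)^{-1}K_{a,B})$, and dividing by $K_a - 1$ gives exactly the Bayes ratio. What your route buys is a slightly more elementary and more robust derivation (no hypothesis on $K_B$), and it is in fact the same device the paper deploys later in Proposition~\ref{condition_on_exclusion}, where the set-exclusion kernel is obtained as the Schur complement of the block LU factorization of the matrix with $K_A - I$ in the pivot position; your proof effectively unifies the single-element and set versions under one construction. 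What the paper's route buys is brevity and the reuse of a named lemma.
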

\begin{proof}
\begin{align*}
  \mathbb{P}[B \subset \mathbf{Y} | a \notin \mathbf{Y}] &= 
    \frac{\mathbb{P}[a \notin \mathbf{Y} | B \subset \mathbf{Y}] \mathbb{P}[B \subset \mathbf{Y}]}{\mathbb{P}[a \notin \mathbf{Y}]} \\
  &= \frac{(1 - \mathbb{P}[a \in \mathbf{Y} | B \subset \mathbf{Y}]) \mathbb{P}[B \subset \mathbf{Y}]}{1 - \mathbb{P}[a \in \mathbf{Y}]} \\
  &= \frac{\det(K_B) - \det(K_{a \cup B})}{1 - K_a} \\
  &= \det(K_B)(1 - K_{a,B} \frac{{K_B}^{-1}}{K_a - 1} K_{B,a}) \\
  &= \det(K_B - K_{B,a} (K_a - 1)^{-1} K_{a,B}),
\end{align*}
where the last equality makes use of the Matrix Determinant Lemma. The formulae
are well-defined almost surely.
\end{proof}

Propositions \ref{condition_on_inclusion} and \ref{condition_on_element_exclusion} are
enough to derive our direct, non-Hermitian DPP sampling algorithm. But, for
the sake of symmetry with Proposition \ref{condition_on_inclusion}, we first
generalize to set exclusion:

\begin{prop}
\label{condition_on_exclusion}
Given disjoint subsets $A,B \subset \mathcal{Y}$, almost surely
\[
  \mathbb{P}[B \subseteq \mathbf{Y} | A \subseteq \mathbf{Y}^c] =
  \det(K_B - K_{B,A} (K_A - I)^{-1} K_{A,B}).
\]
\end{prop}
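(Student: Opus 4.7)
The plan is to reduce
$\mathbb{P}[B \subseteq \mathbf{Y}\,|\,A \subseteq \mathbf{Y}^c] = \mathbb{P}[B \subseteq \mathbf{Y},\,A \cap \mathbf{Y} = \emptyset] / \mathbb{P}[A \cap \mathbf{Y} = \emptyset]$
to a ratio of determinants via three ingredients: (i) inclusion-exclusion over the events $\{a \in \mathbf{Y}\}_{a \in A}$, (ii) a multilinear collapse of the resulting alternating sums of principal minors into single determinants of an augmented matrix, and (iii) the two-by-two block Schur complement identity already exploited in the proof of Proposition \ref{condition_on_inclusion}.

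Applying inclusion-exclusion to $\{A \cap \mathbf{Y} = \emptyset\} = \bigcap_{a \in A}\{a \notin \mathbf{Y}\}$, using the disjointness of $A$ and $B$ and the defining property of a DPP, yields
$$\mathbb{P}[A \cap \mathbf{Y} = \emptyset] = \sum_{S \subseteq A} (-1)^{|S|}\det(K_S)$$
and
$$\mathbb{P}[B \subseteq \mathbf{Y},\,A \cap \mathbf{Y} = \emptyset] = \sum_{S \subseteq A} (-1)^{|S|}\det(K_{B \cup S}).$$
The central step is to recognize each alternating sum as a single determinant of the augmented matrix $M \equiv K_{A \cup B} - \mathds{1}_A$, where $\mathds{1}_A$ denotes the diagonal matrix equal to $1$ on indices of $A$ and $0$ elsewhere. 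Expanding $\det(M)$ by multilinearity along each row $i \in A$ — whose row has the form $K_{i,:} - e_i^T$ — and observing that every term which selects a basis row $e_i^T$ collapses via cofactor expansion to the principal minor with row and column $i$ removed, one obtains
$$\det(M) = (-1)^{|A|} \sum_{S \subseteq A}(-1)^{|S|}\det(K_{B \cup S}),$$
together with the $B = \emptyset$ specialization $\det(K_A - I) = (-1)^{|A|}\,\mathbb{P}[A \cap \mathbf{Y} = \emptyset]$.

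To finish, observe that $\mathbb{P}[A \cap \mathbf{Y} = \emptyset] > 0$ almost surely forces $K_A - I$ to be invertible almost surely, so the two-by-two block $LU$ decomposition from Proposition \ref{condition_on_inclusion} applies to $M$ and gives
$$\det(M) = \det(K_A - I)\,\det\bigl(K_B - K_{B,A}(K_A - I)^{-1}K_{A,B}\bigr).$$
Taking the ratio of the two identified determinants, the common $(-1)^{|A|}$ factors cancel and the claimed Schur-complement formula is read off. The only genuine obstacle is the sign bookkeeping in the multilinear expansion of $\det(M)$; once that is settled, the argument is a direct set-valued extension of the proof of Proposition \ref{condition_on_element_exclusion}.
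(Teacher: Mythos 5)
Your proof is correct, but it takes a genuinely different route from the paper's. The paper's own argument is recursive: it peels off the elements of $A$ one at a time using Proposition \ref{condition_on_element_exclusion}, and then observes that the composition of these successive single-index Schur complements coincides with the single block Schur complement against $K_A - I$, ``as the subtraction of 1 from each eliminated pivot commutes with the outer product updates.'' You instead give a direct, non-recursive computation: inclusion--exclusion expresses both $\mathbb{P}[A \cap \mathbf{Y} = \emptyset]$ and $\mathbb{P}[B \subseteq \mathbf{Y},\, A \cap \mathbf{Y} = \emptyset]$ as alternating sums of principal minors, the multilinear row expansion collapses these sums into $(-1)^{|A|}\det(K_A - I)$ and $(-1)^{|A|}\det(K_{A\cup B} - \mathds{1}_A)$ respectively (I checked the sign bookkeeping: each row in which the $-e_i^T$ term is selected contributes exactly one factor of $-1$ and deletes index $i$, so the signs work out as you state), and the block $LU$ identity finishes the job. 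What your approach buys is self-containedness and an explicit formula --- it never actually needs Proposition \ref{condition_on_element_exclusion} or an induction on $|A|$, and as a byproduct the identity $(-1)^{|J|}\det(K - \mathds{1}_J) = \mathbb{P}[\mathbf{Y} \cap J = \emptyset,\ J^c \subseteq \mathbf{Y}] = \mathbb{P}[\mathbf{Y} = J^c]$ gives an immediate independent proof of Brunel's admissibility criterion (Proposition 1), which the paper instead derives later from the factorization algorithm. What it costs is the combinatorial determinant-expansion lemma, which is the one step requiring care; the paper's recursive argument avoids that entirely at the price of having to justify the commutation of the diagonal shifts with the elimination updates.
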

\begin{proof}
The claim follows from recursive formulation of conditional marginal kernels using the previous proposition. The resulting kernel is equivalent to the Schur
complement produced from the block LU factorization
\[
\begin{pmatrix} K_A - I & K_{A,B} \\ K_{B,A} & K_B \end{pmatrix} =
\begin{pmatrix} I & 0 \\
  K_{B,A} (K_A - I)^{-1} & K_B - K_{B,A} (K_A - I)^{-1} K_{A,B} \end{pmatrix}
\begin{pmatrix} K_A - I & K_{A,B} \\ 0 & I \end{pmatrix},
\]
as the subtraction of 1 from each eliminated pivot commutes with the outer
product updates.
\end{proof}

\begin{theorem}[Factorization-based DPP sampling]
\label{factorization-dpp}
Given a (possibly non-Hermitian) marginal kernel matrix $K$ of order $n$,
an LU factorization of $K$ can be modified as in Algorithm
\ref{lst:unblocked_dpp} to almost surely provide a sample from $\text{DPP}(K)$.
This algorithm involves roughly $\frac{2}{3} n^3$ floating-point operations,
and the likelihood of any returned sample will be given by the product of the
absolute value of the diagonal of the result. If the marginal kernel is
Hermitian, the work can be roughly halved by exploiting symmetry in the Schur
complement updates.
\end{theorem}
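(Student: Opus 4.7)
The plan is to pin everything on a single loop invariant. Processing pivots in the order $0, 1, \ldots, n-1$, let $F_j$ denote the set of indices accepted and $G_j$ the set rejected during the first $j$ steps. I would prove by induction on $j$ that, at the start of step $j$, the trailing $(n-j) \times (n-j)$ Schur complement stored in the working matrix is \emph{exactly} the marginal kernel of the DPP on ground set $[j:n]$ conditioned on $F_j \subseteq \mathbf{Y}$ and $G_j \cap \mathbf{Y} = \emptyset$. The base case $j = 0$ is immediate since $F_0 = G_0 = \emptyset$ and the full matrix $K$ is its own trivial Schur complement.

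For the inductive step, the invariant identifies the $(j,j)$ diagonal entry of the current Schur complement with
\[
  p_j := \mathbb{P}[j \in \mathbf{Y} \mid F_j \subseteq \mathbf{Y},\, G_j \cap \mathbf{Y} = \emptyset] \in [0,1],
\]
so the Bernoulli draw with success probability $p_j$ is well-defined almost surely. On acceptance, the standard rank-one Schur update followed by \cref{condition_on_inclusion} (applied with $A = \{j\}$ to the already-conditioned kernel) yields the marginal kernel for $F_{j+1} = F_j \cup \{j\}$ and $G_{j+1} = G_j$. On rejection, first subtracting $1$ from the pivot and then performing the same rank-one update yields, via \cref{condition_on_element_exclusion}, the marginal kernel for $F_{j+1} = F_j$ and $G_{j+1} = G_j \cup \{j\}$. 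The invariant is preserved in both branches, so after $n$ steps the accepted-index set $F_n$ is distributed as $\mathbf{Y} \sim \text{DPP}(K)$.

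The likelihood claim then follows from the chain rule: step $j$ contributes $p_j$ on acceptance and $1 - p_j$ on rejection, while the diagonal entry stored by the algorithm is $p_j$ in the first case and $p_j - 1$ in the second, whose absolute value (using $p_j \in [0,1]$) matches the contributed probability exactly. For the operation count, the only additions to a standard unblocked LU factorization are an $O(1)$ read, Bernoulli draw, and conditional subtraction per column, totaling $O(n)$ extra work on top of the $\tfrac{2}{3} n^3$ flops of LU; the Hermitian case substitutes an unpivoted $LDL^H$ factorization whose symmetric rank-one Schur update halves the flop count. The main obstacle I anticipate is the rejection-branch bookkeeping: the remark at the end of \cref{condition_on_exclusion}'s proof that subtracting $1$ from eliminated pivots commutes with the outer-product updates is what allows us to handle mixed sequences of acceptances and rejections one pivot at a time rather than as a single block, and formalizing this commutation cleanly inside the inductive step is where most of the care will be required.
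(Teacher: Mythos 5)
Your proposal is correct and matches the paper's own argument essentially step for step: the same loop invariant on the trailing Schur complement, the same appeal to \cref{condition_on_inclusion} for the acceptance branch and \cref{condition_on_element_exclusion} for the rejection branch, and the same chain-rule accounting identifying each diagonal entry's absolute value with the probability of that step's Bernoulli outcome. The commutation worry you flag at the end is already absorbed by applying the single-element exclusion proposition one pivot at a time, which is exactly how the paper proceeds.
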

\begin{proof}
We first demonstrate, by induction, that our factorization algorithm samples
the DPP generated by the marginal kernel $K$. The loop invariant is that, at
the start of the iteration for pivot index $j$, $A_{[j:n]}$ represents
the equivalence class of kernels for the DPP over indices $[j:n]$ conditioned on
the inclusion decisions for indices $0,...,j-1$.

Since the diagonal entries of a kernel matrix represent the likelihood of the
corresponding index being in the sample, the loop invariant implies that index
$j$ is kept with the correct conditional probability.
Prop.\ \ref{condition_on_inclusion}
shows that the loop invariant is almost surely maintained when the Bernoulli
draw is successful, and Prop.\ \ref{condition_on_element_exclusion} handles
the alternative.
Thus, the loop invariant holds almost surely, and, upon completion, the
proposed algorithm samples each subset with the correct probability by
sequentially iterating over each index, making an inclusion decision with the
appropriate conditional probability.

The likelihood of a sample produced by the algorithm is thus the product of the
likelihoods of the results of the Bernoulli draws: when a draw for a diagonal
entry $p_j$ is successful, its probability was $p_j$, and, when unsuccessful, $1 - p_j$. In both cases, the multiplicative contribution is the absolute value
of the final state of the $j$'th diagonal entry.
\end{proof}

\begin{figure}
\begin{mdframed}[backgroundcolor=light-gray, roundcorner=8pt, leftmargin=0, rightmargin=1, innerleftmargin=15, innertopmargin=0, innerbottommargin=0, outerlinewidth=1, linecolor=light-gray]
\begin{lstlisting}[frame=single, language=python, mathescape=true,
  label={lst:unblocked_dpp}, numbers=left,
  caption={Unblocked, right-looking, non-Hermitian DPP sampling.
  For a sample $Y$, the returned matrix $A$ will contain the in-place $LU$
  factorization of $K - \mathds{1}_{Y^C}$, where $\mathds{1}_{Y^C}$ is the
  diagonal indicator
  matrix for the entries not in the sample. Symmetry can be exploited in the
  outer products when the kernel is Hermitian.}]
sample := []; A := K
for j in range(n):
  sample.append(j) if Bernoulli($A_j$) else $A_j \,-\!\!=\, 1$
  $A_{[j+1:n],\,j} \,\,/\!\!=\,\, A_j$
  $A_{[j+1:n]} \,\,-\!\!=\,\, A_{[j+1:n],\,j}\,A_{j,\,[j+1:n]}$
return sample, A
\end{lstlisting}
\end{mdframed}
\end{figure}

Theorem \ref{factorization-dpp} provides us with another interpretation of the
generic DPP kernel admissibility condition of \cite{Brunel-2018}:
\begin{proof}[Proposition 1 above]
When Algorithm \ref{lst:unblocked_dpp} produces a sample $Y$, the resulting
upper and strictly-lower triangular portions of the resulting matrix
respectively contain the $U$ and strictly-lower portion of the unit-diagonal $L$
from the LU factorization of $K - \mathds{1}_{Y^C}$. And we have:
\[
  \det(K - \mathds{1}_{Y^C}) = \det(U) = \prod_{j=0}^{n-1} U_j =
  \prod_{j \in Y} p_j \prod_{j \in Y^C} (p_j - 1),
\]
where $p_j$ is the inclusion probability for index $j$, conditioned on the
inclusion decisions of indices $0, ..., j-1$.

One can inductively show, working backwards from the last pivot, that
$(-1)^{|Y^C|} \det(K - \mathds{1}_{Y^C})$ always being non-negative is
equivalent to all
potential pivots produced by our sampling algorithm, the set of conditional
inclusion probabilities, living in $[0, 1]$. Otherwise, there would exist an
index inclusion decision change which would not change the sign of
$\det(K - \mathds{1}_{Y^C})$.
\end{proof}

By replacing the Bernoulli samples of algorithm \ref{factorization-dpp} with
the maximum-likelihood result for each index inclusion, we arrive at an
analogous (approximate) maximum-likelihood inference algorithm.

In both cases, the same specializations that exist for modifying an unpivoted
LU factorization into a Cholesky or (unpivoted) $LDL^H$ or $LDL^T$
factorization apply to our DPP sampling algorithms. And as we will see in the next two sections, so do high-performance dense and sparse-direct factorization
techniques.

As a brief aside, for real matrices, assuming standard matrix multiplication
algorithms, the highest-order terms for the operation counts of dense Gaussian
Elimination and an MRRR-based~\cite{Dhillon-2006} Hermitian eigensolver are
$\frac{2}{3} n^3$ and $\frac{10}{3} n^3$~\cite{HendricksonJessupSmith-1999}.
But the coefficient for the Hermitian
eigensolver is misleadingly small, as the initial phase of a Hermitian
eigensolver traditionally involves a unitary reduction to Hermitian tridiagonal
form that, due to only modest potential for data reuse, executes significantly less efficiently than traditional dense factorizations. So-called Successive
Band Reduction techniques~\cite{BischofLangSun-2000,HaidarLtaiefDongarra-2011,BallardDemmelKnight-2015} were therefore introduced as a gambit for
trading higher operation counts for decreased data movement and -- as a consequence -- increased performance.

\begin{theorem}[Factorization-based elementary DPP sampling]
\label{factorization-elementary-dpp}
Given a marginal kernel matrix $K$ of order $n$ which is an orthogonal
projection of rank $k$, performing $k$ steps of diagonally-pivoted
Cholesky factorization, where each pivot index is chosen by sampling from the
diagonal as in Algorithm \ref{lst:unblocked_elementary_dpp}, is
equivalent to sampling from $\text{DPP}(K)$. This approach has
complexity $O(n k^2)$ and almost surely completes and returns a sample of
cardinality $k$; the likelihood of the resulting sample is the square of the
product of the first $k$ diagonal entries of the partially factored matrix.
\end{theorem}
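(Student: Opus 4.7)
The plan is to specialize the factorization-based sampling of Theorem \ref{factorization-dpp} to orthogonal projection kernels, exploiting the fact that samples from such a DPP have cardinality exactly $k$ almost surely, so only ``inclusion'' Bernoulli outcomes can occur and the algorithm can terminate after $k$ steps.

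The key structural lemma I would first establish is that if $K = QQ^H$ is an orthogonal projection of rank $k$ (i.e., $Q^H Q = I_k$) and $K_{jj} > 0$, then the Schur complement $K' = K_{[n]\setminus\{j\}} - K_{[n]\setminus\{j\}, j} K_{jj}^{-1} K_{j, [n]\setminus\{j\}}$ is again an orthogonal projection, now of rank $k - 1$. Writing $q_j$ for the $j$-th row of $Q$, a short computation gives $K' = \tilde Q \tilde Q^H$ with $\tilde Q = Q_{[n]\setminus\{j\},:}(I - q_j^H q_j / K_{jj})$; idempotency follows from the factor $(I - q_j^H q_j / K_{jj})$ being an orthogonal projection in $\mathbb{C}^k$, together with the identity $Q^H Q = q_j^H q_j + Q_{[n]\setminus\{j\},:}^H Q_{[n]\setminus\{j\},:} = I$. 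The rank count follows from $\mathrm{tr}(K') = \mathrm{tr}(K) - 1$, using the projection identity $\|K_{j, [n]\setminus\{j\}}\|^2 = K_{jj}(1-K_{jj})$. This is the main obstacle; once it is in hand, the rest reduces to bookkeeping.

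With the lemma, I would proceed by induction on $k$. Since $\mathrm{tr}(K) = k$, the nonnegative diagonal of $K$ sums to $k$, and sampling index $j_0$ with probability $K_{jj}/k$ is well defined; by exchangeability, this is exactly the probability that $j_0$ is chosen if we uniformly pick an element of a size-$k$ sample $\mathbf{Y}$. By Prop.~\ref{condition_on_inclusion}, conditioning on $j_0 \in \mathbf{Y}$ yields a conditional DPP whose marginal kernel is precisely the Schur complement $K'$, which the lemma identifies as a rank-$(k-1)$ orthogonal projection. Applying the induction hypothesis to $K'$ over the ground set $[n] \setminus \{j_0\}$ then closes the recursion.

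The remaining claims follow by straightforward accounting. Telescoping Prop.~\ref{condition_on_inclusion} across the $k$ inclusion steps shows that $\prod_m d^{(m)}_{j_m} = \det(K_Y)$ for $Y = \{j_0, \ldots, j_{k-1}\}$; since the algorithm returns an unordered set and each of the $k!$ orderings of $Y$ is produced with equal probability $\det(K_Y)/k!$, the total probability of outputting $Y$ is $\det(K_Y)$, matching the elementary DPP distribution. Termination in exactly $k$ steps is almost sure because $\mathrm{tr}(K^{(m)}) = k - m > 0$ guarantees a strictly positive diagonal entry to sample from whenever $m < k$. The $O(nk^2)$ cost comes from a left-looking organization: the $m$-th selected Cholesky column costs $O(nm)$ to update against the previously computed columns, summing to $O(nk^2)$. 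Finally, since Cholesky stores $\sqrt{d^{(m)}_{j_m}}$ on the diagonal, the square of the product of the first $k$ diagonal entries of the partially factored matrix recovers $\prod_m d^{(m)}_{j_m} = \det(K_Y)$, as claimed.
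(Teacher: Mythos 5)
Your proposal is correct, but it takes a genuinely different and far more self-contained route than the paper. The paper's proof of Theorem~\ref{factorization-elementary-dpp} is essentially a reduction: it asserts that Algorithm~\ref{lst:unblocked_elementary_dpp} is, up to permutation, algebraically equivalent to the sampling phase of Algorithm~2 of \cite{Gillenwater-2014} (with the Gramian preformed rather than built from the factor), inherits correctness and the almost-sure cardinality-$k$ property from that work and from \cite{HoughEtAl-2006}, and only argues the likelihood claim directly, via $\mathbb{P}[Y = \mathbf{Y}] = \det(K_Y) = \det(L_Y)\det(L_Y^H) = \prod_{j} A_j^2$. You instead prove correctness from first principles. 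Your key lemma --- that the one-index Schur complement of a rank-$k$ orthogonal projection at an index with $K_{jj}>0$ is again an orthogonal projection of rank $k-1$ --- is exactly the structural fact the paper leaves implicit by citation, and your verification is sound: with $P = I - q_j^H q_j/K_{jj}$ one has $K' = Q' P Q'^H$, $P q_j^H = 0$, and $Q'^H Q' = I - q_j^H q_j$ give idempotency, and the trace identity $\sum_{i\neq j}|K_{ij}|^2 = K_{jj}-K_{jj}^2$ gives the rank drop. The induction, the observation that drawing $j_0$ with probability $K_{j_0 j_0}/k$ matches the law of a uniformly chosen element of the size-$k$ sample, and the identification of the conditional kernel via Proposition~\ref{condition_on_inclusion} together reconstruct the classical argument behind Algorithm~18 of \cite{HoughEtAl-2006}; your telescoping over the $k!$ orderings then yields $\mathbb{P}[\text{output} = Y] = k!\cdot\det(K_Y)/k! = \det(K_Y)$, consistent with the paper's likelihood identity. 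What your approach buys is independence from the cited algorithm and an explicit reason the sampling step is always well defined (the conditional diagonal is nonnegative and sums to $k-m$), hence almost-sure termination after exactly $k$ steps; what the paper's approach buys is brevity. The one ingredient you use without proof is the standard fact that a rank-$k$ projection DPP satisfies $|\mathbf{Y}|=k$ almost surely, so that $\mathbb{P}[\mathbf{Y}=Y]=\mathbb{P}[Y\subseteq\mathbf{Y}]$ for $|Y|=k$; the paper cites Lemma~17 of \cite{HoughEtAl-2006} for this, and you should do the same (or note that it also follows from $\sum_{|Y|=k}\det(K_Y)=1$ for a rank-$k$ projection, which your direct computation already supplies).
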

\begin{proof}
Alg.~\ref{lst:unblocked_elementary_dpp} is, up to permutation, algebraically equivalent to the sampling phase of Alg.~2 of \cite{Gillenwater-2014}: for the
sake of simplicity, it assumes the Gramian is preformed rather than forming it
on the fly from the factor. That the probability of a cardinality $k$ sample
$Y$ is, almost surely, equal to the product of the squares of the first $k$
diagonal entries of the result follows from recognizing that the lower triangle
of the top-left $k \times k$ submatrix will be the Cholesky factor of $K_Y$,
and
\[
  \mathbb{P}[Y = \mathbf{Y}] = \mathbb{P}[Y \subseteq \mathbf{Y}] = \det(K_Y) =
  \det(L_Y L_Y^H) = \det(L_Y) \det(L_Y^H) = \prod_{j=0}^{k-1} A_j^2.
\]
\end{proof}

\begin{figure}
\begin{mdframed}[backgroundcolor=light-gray, roundcorner=8pt, leftmargin=0, rightmargin=1, innerleftmargin=25, innertopmargin=0, innerbottommargin=0, outerlinewidth=1, linecolor=light-gray]
\begin{lstlisting}[frame=single, language=python, mathescape=true,
  label={lst:unblocked_elementary_dpp}, numbers=left,
  caption={Unblocked, left-looking, diagonally-pivoted, Cholesky-based sampling
    of a Hermitian Determinantal Projection Process. For a sample $Y$, the
  returned matrix will contain the in-place Cholesky factorization of $K_{Y}$.
  The computational cost is $O(n k^2)$.}]
$A \,:\!=\, K$; $d \,:\!=\, \text{diag}(K)$; orig_indices := [0:n]
for j in range(k):
  # Sample pivot index and permute
  Draw index t from $[j:n]$ with probability $d_t / (k - j)$
  Perform Hermitian swap of indices j and t of $A$
  Swap positions j and t of orig_indices and $d$
  $A_j := \sqrt{d_j}$
  if j == k - 1:
    break
  # Form new column and update diagonal
  $A_{[j+1:n],\,j} \,\,-\!\!=\, A_{[j+1:n],\,[0:j]} A_{j,\,[0:j]}^H$
  for t in range(j+1, n):
    $A_{t, j} \,\,/\!\!=\,\, A_j$
    $d_t \,\,-\!\!=\,\, |A_{t, j}|^2$
return orig_indices[0:k], $A_{[0:k]}$
\end{lstlisting}
\end{mdframed}
\end{figure}

The permutations in Alg.~\ref{lst:unblocked_elementary_dpp} were introduced to
solidify the connection to a traditional diagonally-pivoted Cholesky
factorization. There is the additional benefit of simplifying the usage of Basic
Linear Algebra Subprograms
(BLAS)~\cite{LawsonEtAl-1979,DongarraEtAl-1988,DongarraEtAl-1990} calls for the
matrix/vector products. But the bulk of the work of this approach will be in
rank-one updates, which have essentially no data reuse, and are therefore
not performant on modern machines, where the peak floating-point performance
is substantially faster than what can be read directly from main memory.
The Linear Algebra PACKage (LAPACK)~\cite{AndersonEtAl-1990} was introduced in
1990 as proof that dense factorizations can be recast in terms of matrix/matrix
multiplications; we present analogues in the following section, as well as
multi-core, tiled extensions similar to
\cite{ChanEtAl-2008,ButtariEtAl-2009}.

\section{High-performance, dense, factorization-based DPP sampling}
The main idea of LAPACK~\cite{AndersonEtAl-1990} is to reorganize the
computations within dense linear algebra algorithms so that as much of the work
as possible is recast into composing matrices with nontrivial minimal
dimensions. Basic operations rich in such matrix compositions -- typically
referred to as {\em Level 3 BLAS}~\cite{DongarraEtAl-1990} -- are
the building blocks of LAPACK. In most cases, such {\em block sizes} range from
roughly 32 to 256, with 64 to 128 being most common.

In the case of triangular factorizations, such as Cholesky, $LDL^H$, and LU, a
{\em blocked} algorithm can be produced from the unblocked algorithm by
carefully matricizing each of the original operations. In the case of a
right-looking LU factorization without pivoting, one arrives at
Alg.~\ref{lst:blocked_lu}, where $A_{J_1}$ takes the place of the scalar pivot
and must be factored -- typically, using the unblocked algorithm being generalized -- before its components are used to solve against $A_{J_2,\,J_1}$ and
$A_{J_1,\,J_2}$.

\begin{mdframed}[backgroundcolor=light-gray, roundcorner=10pt, leftmargin=0, rightmargin=1, innerleftmargin=25, innertopmargin=0,innerbottommargin=5, outerlinewidth=1, linecolor=light-gray]
\begin{lstlisting}[frame=single, language=python, mathescape=true,
  label={lst:blocked_lu}, numbers=left,
  caption={Blocked LU factorization without pivoting. The functions triu
  and unit\_tril mirror MATLAB notation and respectively return the
  upper-triangular and unit-diagonal lower-triangular restrictions of their
  input matrices.}]
j := 0
while j < n:
  $\text{bsize} \,:=\, \min(\text{blocksize}, n - j)$
  $J_1 = [j:j+\text{bsize}]$; $J_2 = [j+\text{bsize}:n]$
  $A_{J_1} = \text{unblocked\_lu}(A_{J_1})$
  $A_{J_2,\,J_1} \,\,:=\,\, A_{J_2,\,J_1} \,\, \text{triu}(A_{J_1})^{-1}$
  $A_{J_1,\,J_2} \,\,:=\,\, \text{unit\_tril}(A_{J_1})^{-1} \,\, A_{J_1,\,J_2}$
  $A_{J_2} \,\,-\!\!=\,\, A_{J_2,\,J_1} \,\,\, A_{J_1,\,J_2}$
  $j \,\,+\!\!=\,\, \text{bsize}$
return sample, A
\end{lstlisting}
\end{mdframed}

The correct form of these solves can be derived via the relationship:
\begin{equation*}
\begin{pmatrix}A_{1} & A_{1,\,2} \\ A_{2,\,1} & A_{2}\end{pmatrix} =
\begin{pmatrix}L_{1} & 0 \\ L_{2,\,1} & L_{2}\end{pmatrix}
\begin{pmatrix}U_{1} & U_{1,\,2} \\ 0 & U_{2}\end{pmatrix} =
\begin{pmatrix}L_{1} U_{1} & L_{1} U_{1,\,2} \\
  L_{2,\,1} U_{1} & L_{2,\,1} U_{1,\,2} + L_{2} U_{2}
\end{pmatrix},
\end{equation*}
where we used shorthand of the form $A_{1,2}$ to represent $A_{J_1,\,J_2}$.
An unblocked factorization can be used to compute the triangular factors $L_1$
and $U_1$ of the diagonal block $A_1$, and then triangular solves of the form
$L_{2,1} := A_{2,1} U_1^{-1}$ and
$U_{1,2} := L_1^{-1} A_{1,2}$ yield the two panels. After forming the Schur
complement $S_2 := A_2 - L_{2,1} U_{1,2}$, the problem has been reduced to an
LU factorization with fewer variables -- that of $L_2 U_2 = S_2$.
Asymptotically, all of the work is performed in the outer-product updates which
form the Schur complements.

The conversion of Alg.~\ref{lst:unblocked_dpp}, an unblocked DPP sampler, into
Alg.~\ref{lst:blocked_dpp}, a blocked DPP sampler, is essentially identical to
the formulation of Alg.~\ref{lst:blocked_lu} from an unblocked LU factorization.
We emphasize that, while it is well-known that $LU$ factorizations without
pivoting fail on large classes of nonsingular matrices -- for example, any
matrix with a zero in the top-left position -- the analogue for DPP sampling
succeeds almost surely for any marginal kernel.

\begin{mdframed}[backgroundcolor=light-gray, roundcorner=10pt, leftmargin=0, rightmargin=1, innerleftmargin=25, innertopmargin=0,innerbottommargin=5, outerlinewidth=1, linecolor=light-gray]
\begin{lstlisting}[frame=single, language=python, mathescape=true,
  label={lst:blocked_dpp}, numbers=left,
  caption={Blocked, factorization-based, non-Hermitian DPP sampling.
  For a sample $Y$, the returned matrix $A$ will contain the in-place $LU$ factorization of
  $K - \mathds{1}_{Y^C}$, where $\mathds{1}_{Y^C}$ is the
  diagonal indicator matrix for the entries not in the sample, $\mathbf{Y}$.
  The usual
  specializations from $LU$ to $LDL^H$ factorization applies if the kernel is
  Hermitian.}]
sample := []; A := K; j := 0
Replace line 5 of main loop of Alg. 3 with:
  $\text{subsample}, A_{J_1} = \text{unblocked\_dpp}(A_{J_1})$
  sample.append(subsample + j)
return sample, A
\end{lstlisting}
\end{mdframed}

Beyond the order-of-magnitude improvement provided by such algorithms, even on
a single core of a modern computer, they also simplify the incorporation of
parallelism. Lifting algorithms into blocked form allows for each core to be
dynamically assigned tasks corresponding to individual updates of a
{\em tile}~\cite{ChanEtAl-2008,ButtariEtAl-2009}, a roughly
$\text{tile\_size} \times \text{tile\_size}$submatrix which, in our
experiments, was typically most performant for $\text{tile\_size} = 256$.

Within the context of Alg.~\ref{lst:blocked_dpp}, our tiled algorithm assigns
each \verb!unblocked_dpp! call an individual OpenMP
4.0~\cite{OpenMPBoard-2013} task which depends upon the last Schur complement
update of its tile. The triangular solves against $U_{J_1}$ are split into
submatrices mostly of the form $\text{tile\_size} \times \text{block\_size}$,
those of the solves against $L_{J_1}$ are roughly of the transpose dimensions,
while the Schur complement tasks are roughly of size
$\text{tile\_size} \times \text{tile\_size}$. In each case, tile reads and
writes are scheduled using dependencies on previous updates of the same tile.

The performance of such a dynamically-scheduled parallelization of the
Hermitian specialization of Alg.~\ref{lst:blocked_dpp} is demonstrated for
arbitrary dense, Hermitian marginal kernels on a 16-core Intel i9-7960x in
\cref{fig:real_ldl_dpp_perf}. The performance of a similarly parallelized
unpivoted $LDL^H$ factorization is similarly plotted, and one readily observes
that their runtimes are essentially identical.
The runtime of Alg.~\ref{lst:blocked_dpp} for arbitrary, complex, non-Hermitian
marginal kernels is similarly shown in \cref{fig:complex_lu_dpp_perf}. For purpose of comparison, we note that the double-precision High-Performance LINPACK
benchmark~\cite{DongarraEtAl-2002} achieves roughly 1 TFlop/second on this
machine.

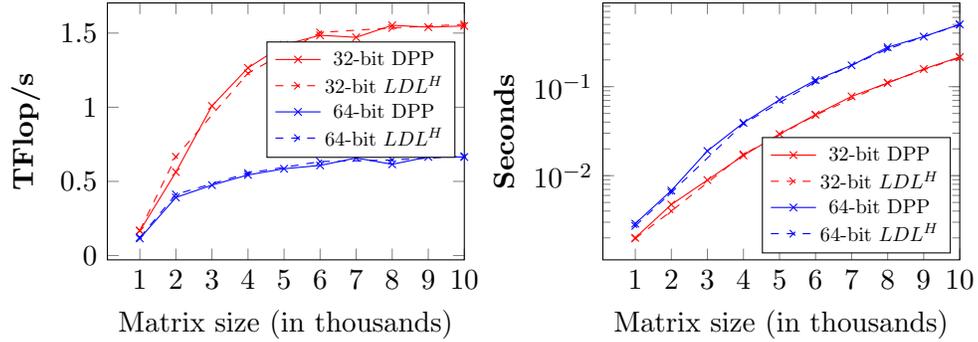
\begin{figure}
\begin{subfigure}{0.5\textwidth}
  \begin{tikzpicture}
  \begin{axis}[
    xlabel={Matrix size (in thousands)},
    ylabel={{\bf TFlop/s}},
    xtick=data,
    xmax=10,
    height=5cm,
    width=\columnwidth,
    legend style={at={(0.975,0.85)},anchor=north east,
      nodes={scale=0.7, transform shape}},
    ylabel near ticks
  ]




  \addplot [color=red,mark=x] coordinates {
    (1, 0.166484) 
    (2, 0.562889) 
    (3, 1.00892) 
    (4, 1.26533) 
    (5, 1.42087) 
    (6, 1.48505) 
    (7, 1.47142) 
    (8, 1.55245) 
    (9, 1.54009) 
    (10, 1.54802) 
  };
  \addlegendentry{32-bit DPP}




  \addplot [color=red,mark=x,dashed] coordinates {
    (1, 0.169156) 
    (2, 0.666026) 
    (4, 1.22837) 
    (6, 1.50421) 
    (8, 1.53484) 
    (10, 1.55903) 
  };
  \addlegendentry{32-bit $LDL^H$}




  \addplot [color=blue,mark=x] coordinates {
    (1, 0.115359) 
    (2, 0.391782) 
    (3, 0.474849) 
    (4, 0.543568) 
    (5, 0.585281) 
    (6, 0.607704) 
    (7, 0.657914) 
    (8, 0.61527) 
    (9, 0.665181) 
    (10, 0.664305) 
  };
  \addlegendentry{64-bit DPP}




  \addplot [color=blue,mark=x,dashed] coordinates {
    (1, 0.122346) 
    (2, 0.412119) 
    (4, 0.555966) 
    (6, 0.633747) 
    (8, 0.644287) 
    (10, 0.669698) 
  };
  \addlegendentry{64-bit $LDL^H$}

  \end{axis}
  \end{tikzpicture}
\end{subfigure}
\begin{subfigure}{0.5\textwidth}
  \begin{tikzpicture}
  \begin{semilogyaxis}[
    xlabel={Matrix size (in thousands)},
    ylabel={{\bf Seconds}},
    xtick=data,
    xmax=10,
    height=5cm,
    width=\columnwidth,
    legend style={at={(0.975, 0.025)},anchor=south east,
      nodes={scale=0.7, transform shape}},
    ylabel near ticks
  ]



  \addplot [color=red,mark=x] coordinates {
    (1, 0.0020022) 
    (2, 0.00473746) 
    (3, 0.00892042) 
    (4, 0.0168599) 
    (5, 0.0293247) 
    (6, 0.0484832) 
    (7, 0.0777028) 
    (8, 0.109934) 
    (9, 0.157783) 
    (10, 0.215329) 
  };
  \addlegendentry{32-bit DPP}




  \addplot [color=red,mark=x,dashed] coordinates {
    (1, 0.00197056) 
    (2, 0.00400384) 
    (4, 0.0173672) 
    (6, 0.0478657) 
    (8, 0.111195) 
    (10, 0.213809) 
  };
  \addlegendentry{32-bit $LDL^H$}



  \addplot [color=blue,mark=x] coordinates {
    (1, 0.00288953) 
    (2, 0.00680651) 
    (3, 0.0189534) 
    (4, 0.0392469) 
    (5, 0.0711909) 
    (6, 0.118479) 
    (7, 0.173781) 
    (8, 0.277385) 
    (9, 0.365314) 
    (10, 0.501777) 
  };
  \addlegendentry{64-bit DPP}




  \addplot [color=blue,mark=x,dashed] coordinates {
    (1, 0.0027245) 
    (2, 0.00647063) 
    (4, 0.0383716) 
    (6, 0.11361) 
    (8, 0.264892) 
    (10, 0.497737) 
  };
  \addlegendentry{64-bit $LDL^H$}

  \end{semilogyaxis}
  \end{tikzpicture}
\end{subfigure}

\caption{Dense, real $LDL^H$-based DPP sampling performance; tile sizes of
$128$ and $256$ were used for matrix sizes less than or equal to, and greater
than, $2000$, respectively.}

\label{fig:real_ldl_dpp_perf}

\end{figure}
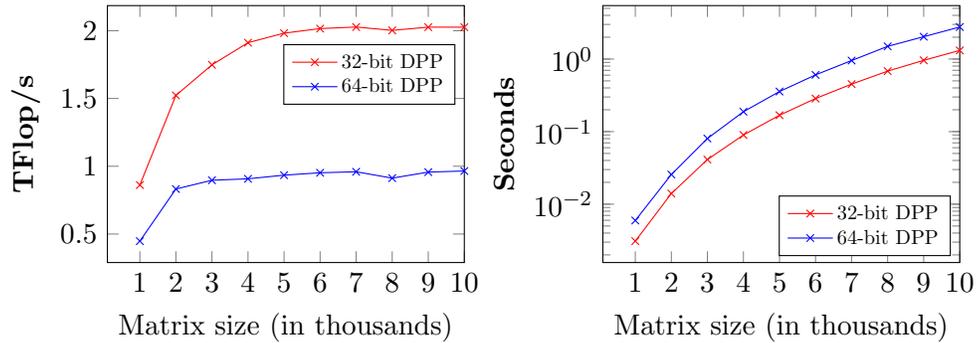
\begin{figure}
\begin{subfigure}{0.5\textwidth}
  \begin{tikzpicture}
  \begin{axis}[
    xlabel={Matrix size (in thousands)},
    ylabel={{\bf TFlop/s}},
    xtick=data,
    xmax=10,
    height=5cm,
    width=\columnwidth,
    legend style={at={(0.975,0.85)},anchor=north east,
      nodes={scale=0.7, transform shape}},
    ylabel near ticks
  ]

  \addplot [color=red,mark=x] coordinates {
    (1, 0.861485) 
    (2, 1.52274) 
    (3, 1.74821) 
    (4, 1.91163) 
    (5, 1.98197) 
    (6, 2.01573) 
    (7, 2.02731) 
    (8, 2.00249) 
    (9, 2.02633) 
    (10, 2.0261) 
  };
  \addlegendentry{32-bit DPP}

  \addplot [color=blue,mark=x] coordinates {
    (1, 0.447552) 
    (2, 0.832059) 
    (3, 0.896618) 
    (4, 0.907387) 
    (5, 0.934222) 
    (6, 0.951907) 
    (7, 0.959236) 
    (8, 0.912317) 
    (9, 0.956736) 
    (10, 0.965017) 
  };
  \addlegendentry{64-bit DPP}

  \end{axis}
  \end{tikzpicture}
\end{subfigure}
\begin{subfigure}{0.5\textwidth}
  \begin{tikzpicture}
  \begin{semilogyaxis}[
    xlabel={Matrix size (in thousands)},
    ylabel={{\bf Seconds}},
    xtick=data,
    xmax=10,
    height=5cm,
    width=\columnwidth,
    legend style={at={(0.975, 0.025)},anchor=south east,
      nodes={scale=0.7, transform shape}},
    ylabel near ticks
  ]
  \addplot [color=red,mark=x] coordinates {
    (1, 0.00309543) 
    (2, 0.0140098) 
    (3, 0.0411851) 
    (4, 0.0898054) 
    (5, 0.168183) 
    (6, 0.285752) 
    (7, 0.451172) 
    (8, 0.681818) 
    (9, 0.959368) 
    (10, 1.31616) 
  };
  \addlegendentry{32-bit DPP}

  \addplot [color=blue,mark=x] coordinates {
    (1, 0.00595834) 
    (2, 0.0256392) 
    (3, 0.0803017) 
    (4, 0.188086) 
    (5, 0.356803) 
    (6, 0.605101) 
    (7, 0.953536) 
    (8, 1.49656) 
    (9, 2.03191) 
    (10, 2.76334) 
  };
  \addlegendentry{64-bit DPP}

  \end{semilogyaxis}
  \end{tikzpicture}
\end{subfigure}

\caption{Dense, complex $LU$-based DPP sampling performance. For single-precision, a tile size of 128 was used up to matrix sizes of 3000, and tile sizes of 256 were used thereafter. For double-precision, the switch occurred above matrices of size 4000.}

\label{fig:complex_lu_dpp_perf}

\end{figure}

It is worth emphasizing the compounding performance gains from both the
formulation of DPP sampling as a small modification of a level-3 BLAS focused
dense matrix factorization and from the 16-way parallelism. When combined, a
factor of 2500x speedup is observed relative to the timings on the same machine
of DPPy v0.1.0~\cite{GautierBardenetValko-2018} when sampling a spectrally-preprocessed arbitrary, dense Hermitian $5000 \times 5000$ matrix. Similar speedups
exist relative to the timings of both the ``sequentially thinned'' and spectrally-preprocessed algorithms of \cite{Launay-2018}.
In the case of DPPy, it would be fair to attribute at least an order of magnitude of the performance difference to its implementation being standard Python
code rather than optimized C++.

In the case of DPPs where the expected number of samples,
$k \equiv \text{trace}(K)$, is much less than the ground set size, $n$,
Derezinski et al.~\cite{DerezinskiEtAl-2019} have proposed a rejection-sampling
approach with a setup cost of $n \cdot \text{polylog}(n) \cdot \text{poly}(k)$
and a subsequent sampling cost of $\text{poly}(k)$. Ignoring polylogarithmic
terms,
their setup cost is $O(n k^6 + k^9)$ and the subsequent sampling cost is
$O(k^6)$. Large speedups can therefore be expected when $k \ll \sqrt{n}$.
But the examples in Figs.~1--4 of this article all involve $k$ being
a modest fraction of $n$, e.g., $k \approx n/4$ in the case of the Aztec
diamond, so the rejection sampler would involve a prohibitive $O(n^9)$ setup
and $O(n^6)$ subsequent sampling cost.

Timings for the $\mathbb{Z}^2$ and hexagonal-tiling Uniform Spanning Tree DPPs
can be extracted from \cref{fig:real_ldl_dpp_perf} via the formulae
$n \approx 2 d^2$ and $n \approx 6 d^2$, respectively, where $d$ is the order
of the grid. And timings for the uniform domino tilings can be recovered from
\cref{fig:complex_lu_dpp_perf} via the formula
$n \approx 4 d^2$. When computing results for the latter, it was noticed that
sampling domino tilings from the Kenyon formula DPP in single-precision, the
results are essentially always inconsistent once the diamond size exceeds 60,
but no inconsistencies have yet been observed with double-precision sampling.
See \cref{fig:corrupted_aztec_sample} for an example.

\begin{figure}
\centering
\includegraphics[width=2.8in]{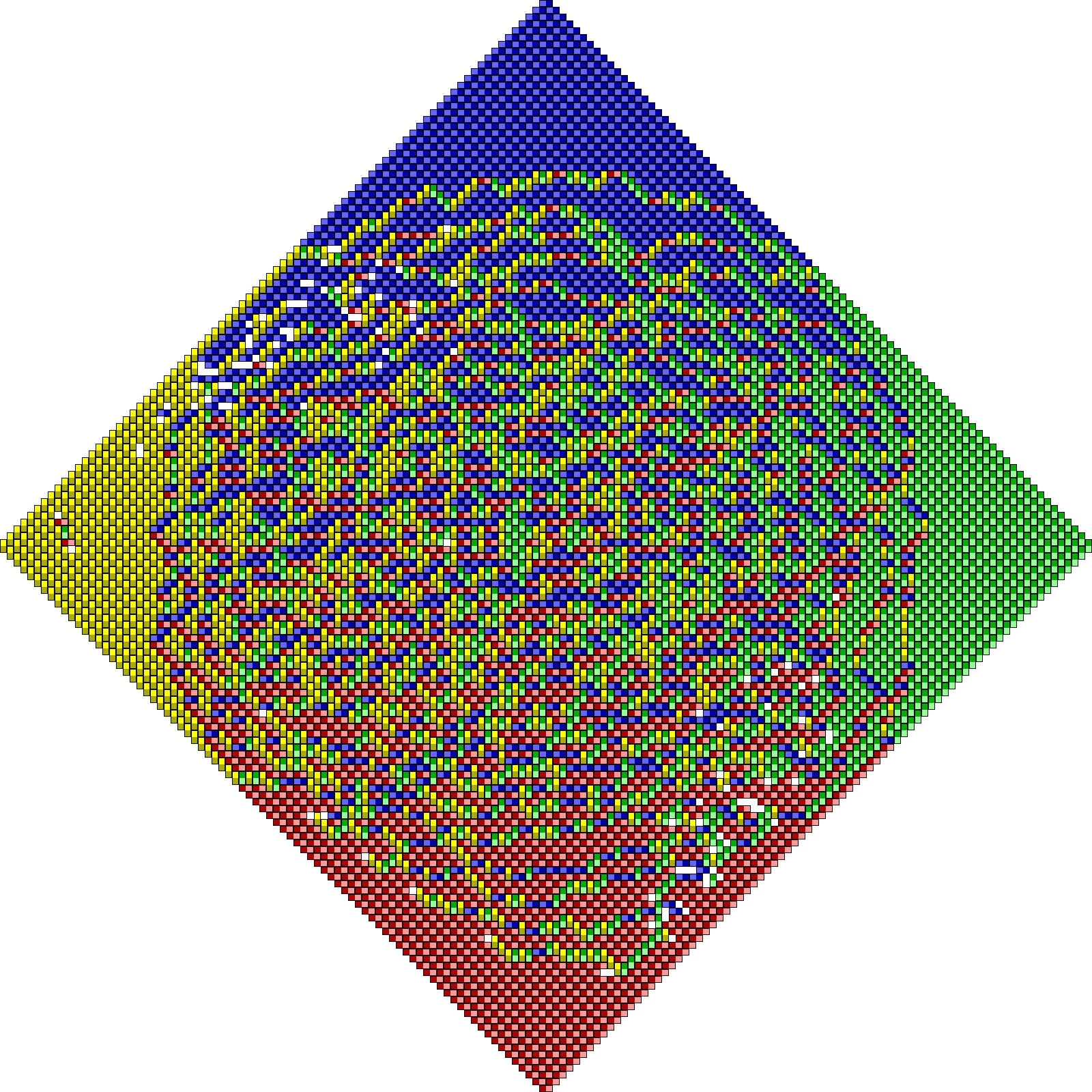} \\
\caption{Corrupted single-precision sample from an Aztec diamond of size 80.
Missing tiles are clearly visible in the image.}
\label{fig:corrupted_aztec_sample}
\end{figure}

Such a numerical instability in an unpivoted dense matrix factorization would 
lead any numerical analyst to wonder if dynamic pivoting can mitigate error
accumulation. Indeed, our factorization-based approach frees us to perform
arbitrary diagonal pivoting, as long as the pivot is decided before its
corresponding Bernoulli draw. While it will be the subject of future work,
the author conjectures that {\em maximum-entropy} pivoting, that is, pivoting
towards a diagonal entry with conditional probability as close to $\frac{1}{2}$
as possible, would be the most beneficial, as it maximizes the magnitude of
the smallest possible pivot.

Before moving on to sparse-direct DPP sampling, we demonstrate the multiple
orders of magnitude speedup that are possible for Determinantal Projection
Processes of sufficiently low rank. Unlike our other experiments,
\cref{fig:real_ldl_elementary_dpp_perf} only executes on a single core, as
the unblocked, left-looking rank-revealing elementary DPP sampling approach of
Alg.~\ref{lst:unblocked_elementary_dpp} spends the majority of its time in
matrix/vector multiplication. Studying the benefits of parallelizations of
elementary DPP samplers is left for future work.

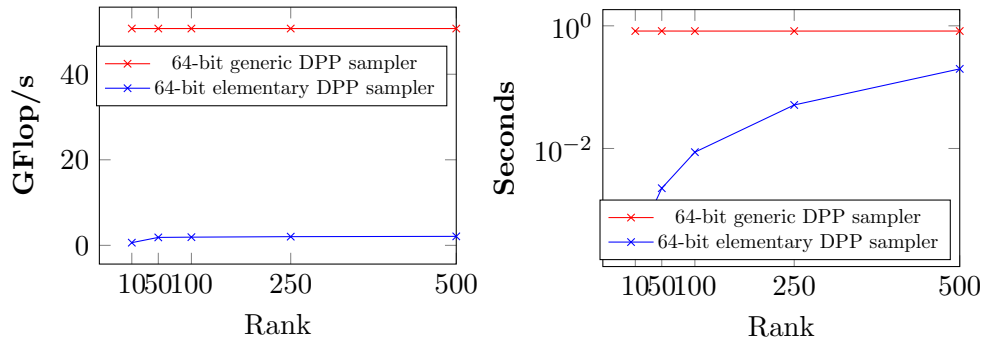
\begin{figure}
\begin{subfigure}{0.5\textwidth}
  \begin{tikzpicture}
  \begin{axis}[
    xlabel={Rank},
    ylabel={{\bf GFlop/s}},
    xtick=data,
    xmax=500,
    height=5cm,
    width=\columnwidth,
    legend style={at={(0.975,0.85)},anchor=north east,
      nodes={scale=0.7, transform shape}},
    ylabel near ticks
  ]

  \addplot [color=red,mark=x] coordinates {
    (10, 50.6829)
    (50, 50.6829)
    (100, 50.6829)
    (250, 50.6829)
    (500, 50.6829)
  };
  \addlegendentry{64-bit generic DPP sampler}

  \addplot [color=blue,mark=x] coordinates {
    (10, 0.628336)
    (50, 1.85286)
    (100, 1.91435)
    (250, 2.03423)
    (500, 2.09844)
  };
  \addlegendentry{64-bit elementary DPP sampler}

  \end{axis}
  \end{tikzpicture}
\end{subfigure}
\begin{subfigure}{0.5\textwidth}
  \begin{tikzpicture}
  \begin{semilogyaxis}[
    xlabel={Rank},
    ylabel={{\bf Seconds}},
    xtick=data,
    xmax=500,
    height=5cm,
    width=\columnwidth,
    legend style={at={(0.975, 0.025)},anchor=south east,
      nodes={scale=0.7, transform shape}},
    ylabel near ticks
  ]

  \addplot [color=red,mark=x] coordinates {
    (10, 0.822105)
    (50, 0.822105)
    (100, 0.82105)
    (250, 0.82105)
    (500, 0.822105)
  };
  \addlegendentry{64-bit generic DPP sampler}

  \addplot [color=blue,mark=x] coordinates {
    (10, 2.65251e-4)
    (50, 2.24877e-3)
    (100, 0.00870617)
    (250, 0.0512068)
    (500, 0.19856)
  };
  \addlegendentry{64-bit elementary DPP sampler}

  \end{semilogyaxis}
  \end{tikzpicture}
\end{subfigure}

\caption{Sequential, dense, real $LDL^H$-based elementary DPP sampling
performance for a ground set of size $5000$ and varying rank. Multithreaded
BLAS was explicitly disabled to ensure execution on a single core of the Intel
i9-7960x.}

\label{fig:real_ldl_elementary_dpp_perf}

\end{figure}

\section{Sparse-direct DPP sampling}
As was demonstrated in \cref{fig:real_ldl_dpp_perf}, the performance of a
generic Hermitian DPP sampler can be made to match that of a high-performance
Hermitian matrix factorization. This correspondence should not be a surprise,
as we have shown that matrix factorizations can be trivially modified to yield
Hermitian and non-Hermitian DPP samplers alike.
The same insight applies to the conversion of sparse-direct, unpivoted
$LDL^H$ factorization into sparse-direct Hermitian DPP samplers. Such an
implementation, mirroring many of the techniques from
CholMod~\cite{ChenEtAl-2008}, was implemented within Catamari.

The high-level approach is to use a dynamically scheduled -- via OpenMP 4.0
task scheduling -- {\em multifrontal
method}~\cite{DuffReid-1983,AshcraftGrimes-1989}
when the computational intensity of the factorization is deemed high enough
after determining the number of nonzeros and of the triangular factor and the
number of operations required to compute it. If the arithmetic count and
intensity is not sufficiently high, an {\em up-looking}, scalar
algorithm~\cite{ChenEtAl-2008} is used instead.

The multifrontal method performs the bulk of its work processing an
({\em elimination}) tree of dense {\em frontal matrices} in a post-ordering.
Each such frontal matrix for a supernode $s$ initially takes the form
\[
  \begin{pmatrix} A_s & * \\
    A_{\text{struct}(s),s} & 0
  \end{pmatrix},
\]
where $A_s$ is the diagonal block of the input matrix corresponding to supernode
$s$, $A_{\text{struct}(s),s}$ is the submatrix of $A$ below the supernodal
diagonal block that contains at least one nonzero in each row, and the $*$
denotes the upper-right quadrant not needing to be accessed due to Hermiticity.
Processing such a front involves adding any child Schur complements onto the
front, factoring $A_s = L_s D_s L_s^H$, solving against $D_s L_{s}^H$ to replace
$A_{\text{struct}(s),s}$ with $L_{\text{struct}(s),s}$, and replacing
the bottom-right quadrant with the Schur complement
$-L_{\text{struct}(s),s} D_s L_{\text{struct}(s),s}^H$.

This multifrontal process is then parallelized using a tile-based, dynamically
scheduled DPP sampler or dense factorization on the diagonal blocks, combined
with nested OpenMP tasks launched for each
(relaxed) supernode's~\cite{AshcraftGrimes-1989} subtree~\cite{HoggScott-2013}.


While the author is not aware of any well-studied sparse marginal kernel
matrices, \cite{MarietSra-2016} proposed a scheme for learning data-sparse
marginal kernels defined via Kronecker products and evaluated their technique
on the Amazon baby registry dataset of \cite{GillenwaterEtAl-2014}.
While DPP learning schemes are beyond the scope of this manuscript, the
dramatic factorization speedups that can be supplied by sparse-direct solvers
suggests the possibility of learning an entrywise sparse marginal kernel
matrix, perhaps via incorporating entrywise soft-thresholding into an iterative
learning scheme~\cite{GillenwaterEtAl-2014}.

We therefore implemented tandem sparse-direct factorization and sparse-direct
Hermitian DPP samplers to connect performance results of synthetic sparse DPPs
and discretized Partial Differential Equation solvers. Such a
statically-pivoted sparse-direct $LDL^H$ factorization is also
the critical computational kernel of primal-dual Interior Point Methods~\cite{AltmanGondzio-1993,Gondzio-1996}, where the first-order optimality conditions
are symmetric quasi-semidefinite~\cite{Vanderbei-1995,GeorgeEtAl-2000}.

Performance results for the dynamically scheduled sparse-direct solver
on a 3D Helmholtz equation discretized with trilinear, hexahedral elements~\cref{fig:helmholtz_3d_lens} are shown in \cref{fig:helmholtz_3d_perf}; results for
a 2D DPP analogue, where the marginal kernel takes the form of a scaled 2D
Laplacian, are shown in \cref{fig:laplacian_2d_0_72}.


\begin{figure}
\centering
\includegraphics[width=3in]{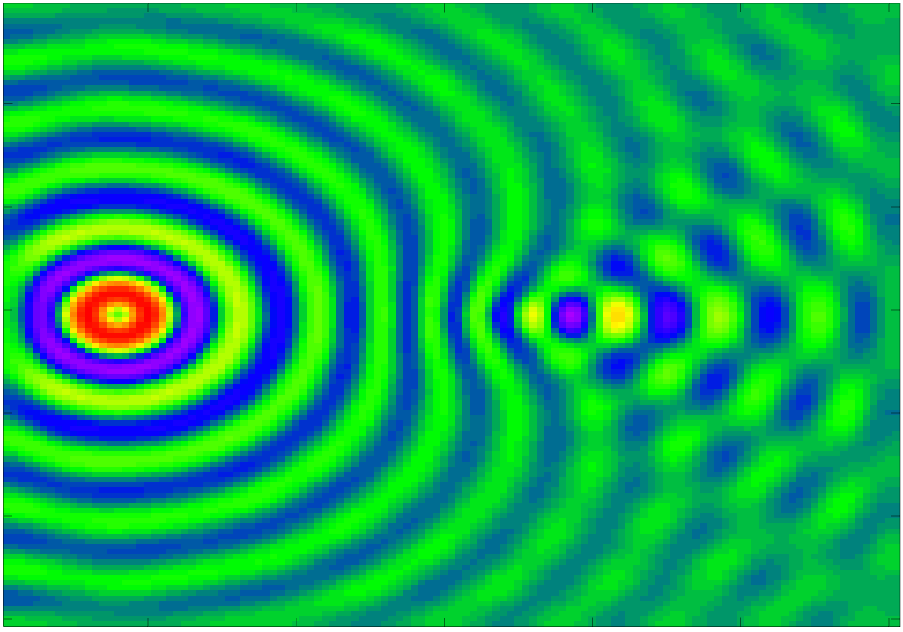}
\caption{A 2D slice for a solution of a 3D Helmholtz equation discretized with trilinear, hexahedra elements and PML boundary conditions for a Gaussian point source near the boundary and a converging lens near the center of the domain.}
\label{fig:helmholtz_3d_lens}
\end{figure}

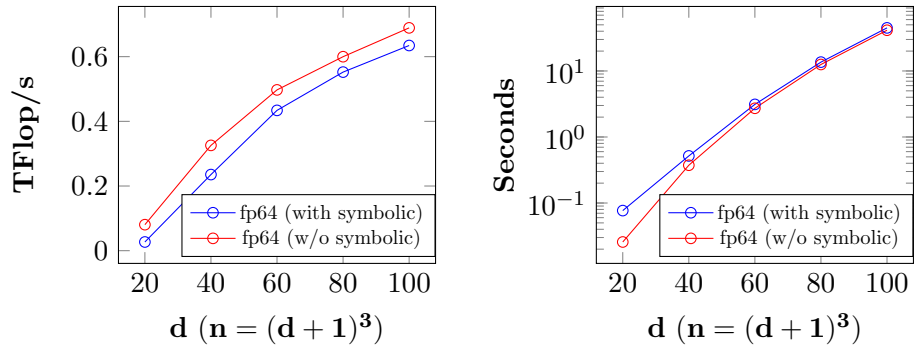
\begin{figure}
\begin{subfigure}{0.5\textwidth}
  \begin{tikzpicture}
  \begin{axis}[
    xlabel={$\mathbf{d}$ ($\mathbf{n=(d+1)^3}$)},
    ylabel={{\bf TFlop/s}},
    height=5cm,
    legend style={at={(1,0.025)},anchor=south east,
      nodes={scale=0.7, transform shape}}
  ]

    \addplot [color=blue,mark=o] coordinates {
      (20, 0.0269129)
      (40, 0.235388)
      (60, 0.433892) 
      (80, 0.552256)
      (100, 0.634487)
    };
    \addlegendentry{fp64 (with symbolic)}
    \addplot [color=red,mark=o] coordinates {
      (20, 0.080453) %
      (40, 0.325638) %
      (60, 0.497209) %
      (80, 0.599874) %
      (100, 0.689021) %
    };
    \addlegendentry{fp64 (w/o symbolic)}

  \end{axis}
  \end{tikzpicture}
\end{subfigure}
\begin{subfigure}{0.5\textwidth}
  \begin{tikzpicture}
  \begin{semilogyaxis}[
    xlabel={$\mathbf{d}$ ($\mathbf{n=(d+1)^3}$)},
    ylabel={{\bf Seconds}},
    height=5cm,
    legend style={at={(1,0.025)},anchor=south east,
      nodes={scale=0.7, transform shape}}
  ]

    \addplot [color=blue,mark=o] coordinates {
      (20, 0.0767901)
      (40, 0.516685)
      (60, 3.11796)
      (80, 13.6154)
      (100, 44.9421)
    };
    \addlegendentry{fp64 (with symbolic)}
    \addplot [color=red,mark=o] coordinates {
      (20, 0.0256876)
      (40, 0.373486)
      (60, 2.72091)
      (80, 12.5346)
      (100, 41.3851)
    };
    \addlegendentry{fp64 (w/o symbolic)}

  \end{semilogyaxis}
  \end{tikzpicture}
\end{subfigure}

\caption{Sparse-direct performance on trilinear, hexahedral discretization of
3D Helmholtz equation with Perfectly Matched Layer boundary conditions.}
\label{fig:helmholtz_3d_perf}
\end{figure}

\begin{figure}
\begin{subfigure}{0.5\textwidth}
\includegraphics[width=2in]{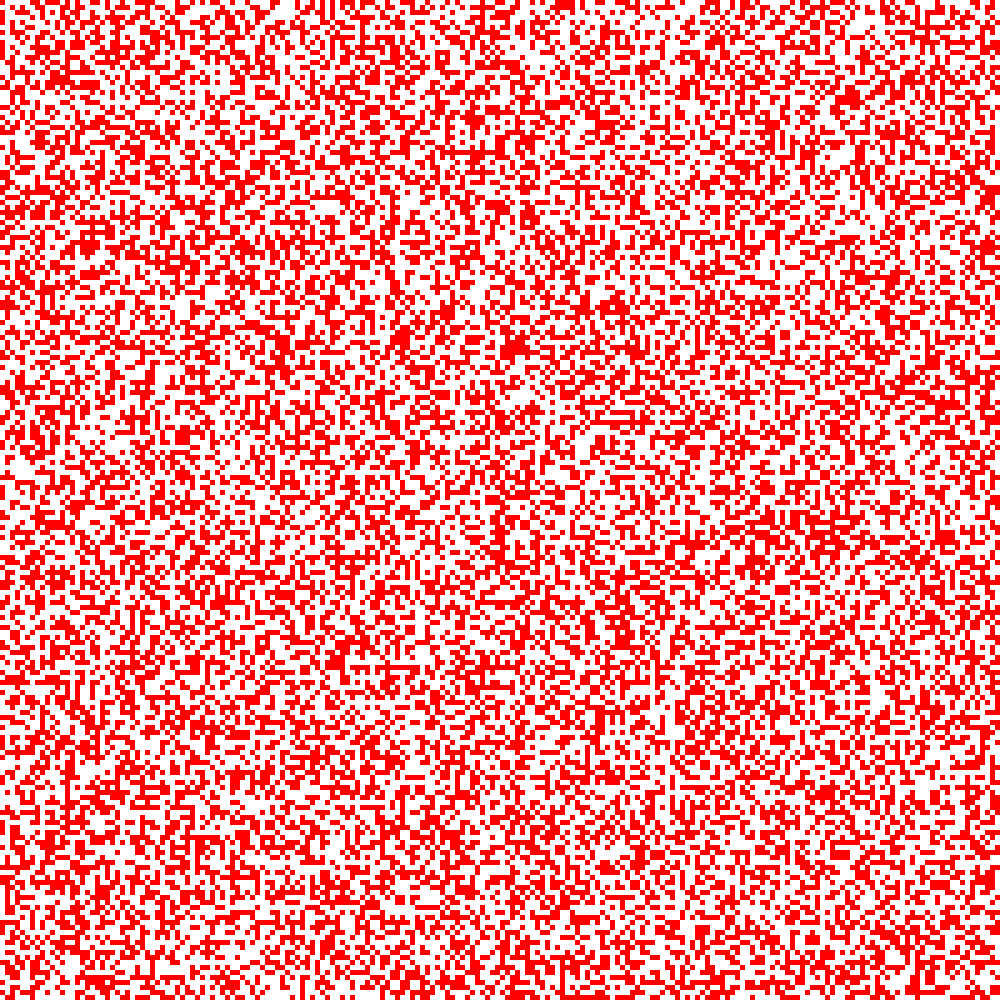}
\caption{Sample with log-likelihood of -27472.2.}
\end{subfigure}
\begin{subfigure}{0.5\textwidth}
\includegraphics[width=2in]{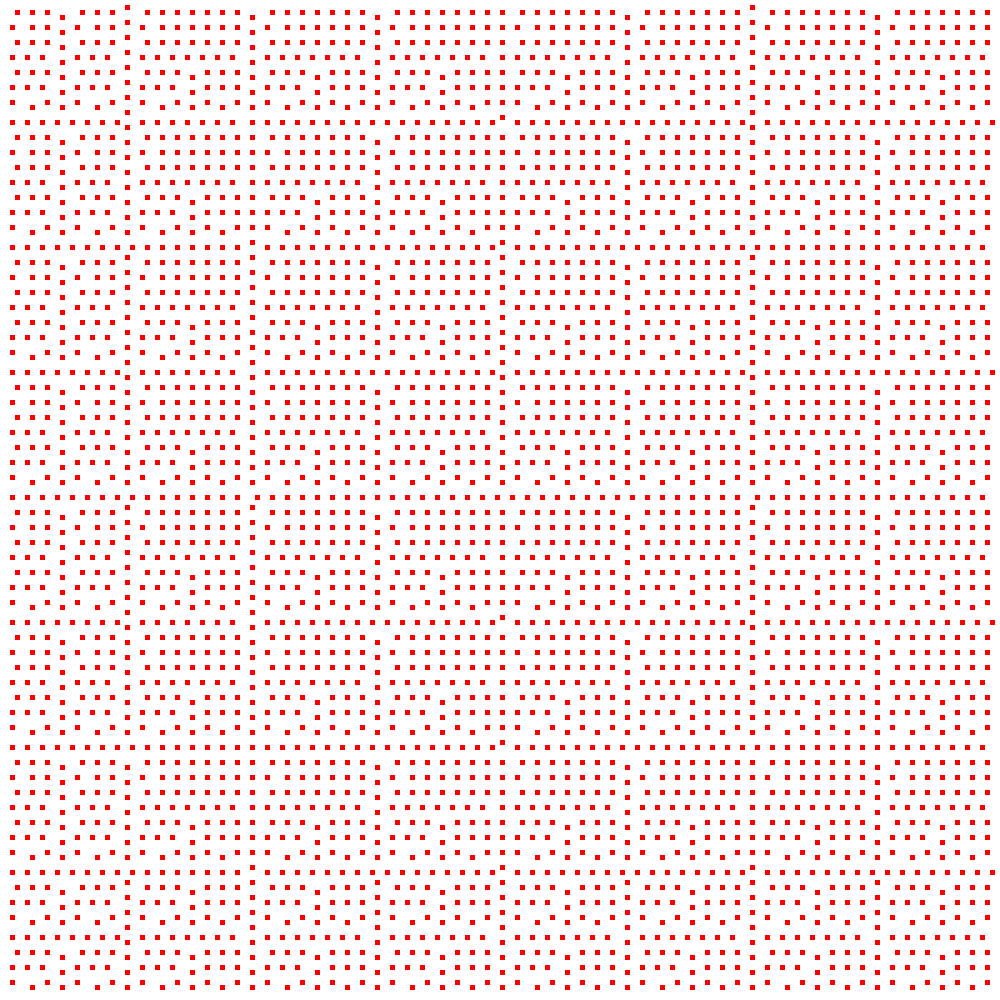}
\caption{Greedy maximum-likelihood sample with log-likelihood of -26058.}
\end{subfigure}
\caption{Samples from $-\sigma \Delta$ over a 200 x 200 grid,
where $\sigma=0.72$.}
\label{fig:laplacian_2d_0_72}
\end{figure}

In the case of the 2D Laplacian sparse-direct DPP sampling over a
$200 \times 200$ grid, the timings were roughly 0.01 seconds on the 16-core
i9-7960x. Extrapolating from \cref{fig:real_ldl_dpp_perf} to a dense matrix size
of $40,000$, it is clear that several more orders of magnitude of efficiency
are gained with the sparse-direct formulation.

We close this section by noting that static pivoting should apply equally well
to non-Hermitian sparse-direct DPP sampling, as the nature of small pivots
being, by definition, rare, suggests a certain degree of stability. As in the
case of dense DPP sampling, probabilistic error analysis is warranted.

\section{Conclusions}
A unified, generic framework for sequentially sampling both Hermitian and
non-Hermitian Determinantal Point Processes directly from their marginal kernel
matrices was presented. The prototype algorithm consisted of a small
modification of an unpivoted LU factorization -- in the Hermitian case, it
reduces to a modification of an unpivoted $LDL^H$ factorization -- and
high-performance, tiled algorithms were presented in the dense case, and an
analogue of CholMod~\cite{ChenEtAl-2008} was presented for the sparse case.
Further, it was shown that both greedy, maximum-likelihood sampling and
elementary DPP sampling can be understood and efficiently implemented using
small modifications of traditional matrix factorization techniques.

In addition to generalizing sampling algorithms from Hermitian to non-Hermitian
marginal kernels, the proposed approaches were implemented within the open
source, permissively licensed, Catamari package and shown to lead to orders
of magnitude speedups, even in the dense regime. The Hermitian sparse-direct
DPP sampler leads to further asymptotic speedups.
Future work includes theoretical and empirical exploration of the stability of
dynamic pivoting techniques, both for dense and sparse-direct
factorization-based DPP sampling, including the incorporation of
maximum-entropy pivoting for large instances of Kenyon-formula Aztec domino
tilings.

Further exploration of the performance tradeoffs between left-looking
and right-looking elementary DPP sampling, and at which rank it becomes
beneficial to use our generic sampling approach, is warranted -- especially on
multi-core and GPU-accelerated architectures. And, lastly, the
incorporation of a tiled extension of \cite{BientinesiEtAl-2008} for converting
a Hermitian L-ensemble kernel into a marginal kernel via $K = I - (L + I)^{-1}$
~\cite{Gillenwater-2014} is a natural extension of our proposed techniques and
could be expected to require the equivalent of 3 samples worth of time due to
the computational complexity.

But perhaps the most pressing future direction is to investigate the potential
of learning sparse marginal kernels on a benchmark problem, such as the
Amazon baby registry of \cite{GillenwaterEtAl-2014}. Ideally such a scheme
would expose a roughly block-diagonal covariance matrix which captures item
clusters and allows for sufficiently sparse triangular factors.

\section*{Reproducibility}
The experiments in this paper were performed using version 0.2.5 of Catamari,
with Intel's Math Kernel Library BLAS on an Ubuntu 18.04 workstation with 64
GB of RAM and an Intel i9-7960x processor. Catamari can be downloaded from
\url{https://gitlab.com/hodge_star/catamari}.
\cref{fig:grid_40,fig:hexagonal_10} were generated using the TIFF output from
example driver \\
\verb!example/uniform_spanning_tree.cc!, while
\cref{fig:aztec_10,fig:aztec_80,fig:corrupted_aztec_sample} similarly used\\
\verb!example/aztec_diamond.cc!.

\cref{fig:real_ldl_dpp_perf,fig:complex_lu_dpp_perf} were
generated using data from \verb!example/dense_dpp.cc! and
\verb!example/dense_factorization.cc!; we emphasize that explicitly setting
both \verb!OMP_NUM_THREADS! and \verb!MKL_NUM_THREADS! to 16 on our 16-core
machine led to significant performance improvements (as opposed to the default
exploitation of hyperthreading). And \cref{fig:real_ldl_elementary_dpp_perf}
was produced with \\
\verb!example/dense_elementary_dpp.cc! -- with
\verb!OMP_NUM_THREADS! and \\
\verb!MKL_NUM_THREADS! explicitly set to one. The
algorithmic \verb!block_size! was also set to a value at least as large as the
rank so that a left-looking approach was used throughout.

\cref{fig:helmholtz_3d_lens,fig:helmholtz_3d_perf} were generated from
\verb!example/helmholtz_3d_pml.cc!, with the 2D-slice of the former taken
slightly away from the center plane. \cref{fig:laplacian_2d_0_72} was output
from the TIFF support of \\
\verb!example/dpp_shifted_2d_negative_laplacian.cc!.

\section*{Acknowledgements}
The author would like to sincerely thank Guillaume Gautier and R{\'{e}}mi
Bardenet for detailed comments on an early draft of this manuscript, as well as
their concise explanation of the correctness of the factorization-based sampler
at\\
\url{https://dppy.readthedocs.io/en/latest/finite_dpps/exact_sampling.html#id17}. I would also like to thank
Jennifer A. Gillenwater for suggesting the $I - (L + I)^{-1}$ conversion from
an L-ensemble and Alex Kulesza for answering some of my early DPP questions.
Lastly, I am thankful for the insightful comments and suggestions of the two
anonymous referees: they significantly improved the quality of the manuscript.

\bibliography{hpc_dpp}

\end{document}